\providecommand{\U}[1]{\protect\rule{.1in}{.1in}}
\DeclareMathSymbol{\subsetneqq}{\mathbin}{AMSb}{36}
\theoremstyle{plain}
\numberwithin{equation}{section}
\newtheorem{theorem}{Theorem}[section]
\newtheorem{lemma}{Lemma}[section]
\newtheorem{proposition}{Proposition}[section]
\newtheorem{definition}{Definition}[section]
\newtheorem{remark}{Remark}[section]
\newtheorem{notation}{Notation}
\begin{document}
\title[Extension of an unicity class for Navier-Stokes equations]
{Extension of an unicity class for Navier-Stokes equations}%
\author{Ramzi May}%
\address{Mathematics Department, College of Science of Bizerte, Carthage University, Bizerte, Tunisia}
\email{rmay@kfu.edu.sa}
\subjclass{76D05, 76D03, 35Q30, 46E35.} \vskip 0.2cm
\keywords{ Navier-Stokes equations, Besov spaces, Bony's
para-product}
\vskip 0.2cm
\date{April 9, 2018}
\dedicatory{}
\begin{abstract}
This is a translation from French of my paper [R.
May, Extension d'une classe d'unicite pour les equations de Navier-Stokes,
Ann. I. H. Poincar\'{e}-AN 27 (2010) 705-718. doi:10.1016/j.anihp.2009.11.007].

Q. Chen, C. Miao, and Z. Zhang \cite{CMZ} have proved that weak Leray
solutions of the Navier-Stokes are unique in the class $L^{\frac{2}{1+r}%
}([0,T].B_{\infty}^{r,\infty}(\mathbb{R}^{3})$ with $r\in]-\frac{1}{2},1].$ In
this paper, we establish that this criterion remains true for $r\in
]-1,-\frac{1}{2}].$
\end{abstract}
\maketitle

\section{Introduction and statement of the results}

We consider the Navier-Stokes equations for an incompressible fluid in the
entire space $\mathbb{R}^{d},d\geq2,$%

\begin{equation}
\left\{
\begin{array}
[c]{c}%
\partial_{t}u-\Delta u+\nabla(u\otimes u)+\overrightarrow{\nabla}p=0,\\
\overrightarrow{\nabla}.u=0,\\
u(0,.)=u_{0}(.)
\end{array}
\right.  \tag{NS}%
\end{equation}
where $u_{0}$ is the initial velocity of the fluid particles, $u=u(t,x)$
designs of the particle placed in $x\in\mathbb{R}^{d}$ at the time $t\geq0,$
$p=p(t,x)$ is the pressure at $x\in\mathbb{R}^{d}$ and $t\geq0,$
$\overrightarrow{\nabla}=(\partial_{x_{1}},\cdots,\partial_{x_{d}})^{t}$
denotes the gradient operator, $\overrightarrow{\nabla}.$ is the divergence
operator, and $\nabla(u\otimes u)$ is the vector function $(w_{1},\cdots
,w_{d})$ defined by%
\[
w_{i}=\sum_{k=1}^{d}\partial_{x_{k}}(u_{k}u_{i})=\overrightarrow{\nabla
}.(u_{i}u).
\]

Let us first recall the notion of the weak solutions for the Navier-Stokes
equations that we will adopt in this paper.

\begin{definition}
Let $T\in]0,+\infty]$ and $u_{0}=(u_{01},\cdots,u_{0d})\in(S^{\prime
}(\mathbb{R}^{d}))^{d}$ with divergence free. A weak solution on $]0,T[$ of
the equations (NS) is a function $u:Q_{T}\equiv]0,T[\times\mathbb{R}%
^{d}\rightarrow\mathbb{R}^{d}$ satisfying the following properties:
\begin{enumerate}
\item[1)] $u\in L_{loc}^{2}(\tilde{Q}_{T})$ where $\tilde{Q}_{T}\equiv
\lbrack0,T[\times\mathbb{R}^{d}.$

\item[2)] $u\in C([0,T[,S^{\prime}(\mathbb{R}^{d})).$

\item[3)] $u(0)=u_{0}.$

\item[4)] For all $t\in\lbrack0,T[,~\overrightarrow{\nabla}.(u(t))=0$ in
$S^{\prime}(\mathbb{R}^{d}).$

\item[5)] There exists $p\in D^{\prime}(Q_{T})$ such that $\partial
_{t}u-\Delta u+\nabla(u\otimes u)+\overrightarrow{\nabla}p=0$ in $D^{\prime
}(Q_{T}).$
\end{enumerate}
\end{definition}

In 1934, J. Leray \cite{Ler} proved that, for any initial data $u_{0}$ in
$L^{2}(\mathbb{R}^{d})$ with divergence free, the Navier-Stokes equations have
at least one weak solution $u$ on $]0,+\infty\lbrack$ which, for every $T>0,$
belongs to the Leray energy space $\mathcal{L}_{T}$ defined by:%
\[
\mathcal{L}_{T}=L^{\infty}([0,T],L^{2}(\mathbb{R}^{d}))\cap L^{2}%
([0,T],H^{1}(\mathbb{R}^{d})).
\]
This leads us to introduce the following notion of Leray weak solutions.

\begin{definition}
Let $T>0$ and $u_{0}$ in $L^{2}(\mathbb{R}^{d})$ with divergence free. We call
Leray weak solutions of the equations (NS) on $]0,T[$ every weak solution on
$]0,T[$ of (NS) which belongs to the Leray energy space $\mathcal{L}_{T}.$
\end{definition}

Naturally, the question on the uniqueness of the Leray weak solutions raises.
In the bi-dimension case corresponding to $d=2$, it is well known that such
solutions are unique (see for instance \cite{Tem}). \ However, in the case
$d\geq3,$ the question remains open. We only have some partial answers. In
fact, the uniqueness is obtained under some variant of supplementary
conditions on the regularity of the solutions. As examples, we cite the works
of J. Serrin \cite{Ser}, W. Von Wahl \cite{Von}, J. Y. Chemin \cite{Che}, I.
Gallagher and F. Plonchon \cite{GP}, and P. Germain \cite{Ger}. In This
direction, Q. Chen, C. Miao, and Z. Zhang have recently proved the following
uniqueness result.

\begin{theorem}
[See \cite{CMZ}, Theorem 1.4]Let $T>0$ and $u_{0}$ in
$L^{2}(\mathbb{R}^{d})$ with divergence free. Let $u_{1}$ and $u_{2}$ be two
Leray weak solutions of the equations (NS) on $]0,T[.$ Assume that
\[
u_{1}\in L^{\frac{2}{1-r_{1}}}([0,T],B_{\infty}^{-r_{1},\infty}(\mathbb{R}%
^{d}))\text{ and }u_{2}\in L^{\frac{2}{1-r_{2}}}([0,T],B_{\infty}%
^{-r_{2},\infty}(\mathbb{R}^{d}))
\]
where $0\leq r_{1},r_{2}<1$ and $r_{1}+r_{2}<1.$ Then $u_{1}=u_{2}.$
\end{theorem}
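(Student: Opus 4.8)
The plan is to work with the difference $w=u_{1}-u_{2}$, which lies in the Leray class $\mathcal{L}_{T}$, is divergence free, satisfies $w(0)=0$, and solves
\[
\partial_{t}w-\Delta w+\mathbb{P}\,\overrightarrow{\nabla}.(u_{1}\otimes w+w\otimes u_{2})=0,
\]
where $\mathbb{P}$ is the Leray projector, so the pressure is eliminated. Since $w\in L^{\infty}([0,T],L^{2})\cap L^{2}([0,T],H^{1})$, interpolation gives $w\in L^{2/s}([0,T],\dot H^{s})$ for every $s\in[0,1]$, and this is the only regularity of $w$ I will use. Because $u_{1},u_{2}$ are merely Leray weak solutions, I cannot legitimately test the equation for $w$ against $w$ itself. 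Instead I would obtain an energy inequality for $w$ by adding the two Leray energy inequalities for $u_{1}$ and $u_{2}$ to the cross relation produced by using $u_{2}$ as a test function in the weak formulation of $u_{1}$ and vice versa. Using $\overrightarrow{\nabla}.u_{i}=0$ and integrating by parts, the four resulting trilinear terms collapse into a single one, giving for a.e. $t$
\[
\tfrac12\|w(t)\|_{L^{2}}^{2}+\int_{0}^{t}\|\nabla w\|_{L^{2}}^{2}\,ds\le\int_{0}^{t}\Big|\int_{\mathbb{R}^{d}}(w\otimes w):\nabla u_{2}\,dx\Big|\,ds ,
\]
and, by symmetry of the computation, the same bound holds with $u_{2}$ replaced by $u_{1}$, so I may keep whichever is more convenient.

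The heart of the matter is twofold: giving rigorous meaning to the nonlinear cross terms (each solution carries only negative-regularity information) and then estimating the surviving trilinear term; both are handled through Bony's para-product. For the a priori bound I would write the integrand as $N(s)=\langle u_{2},\overrightarrow{\nabla}.(w\otimes w)\rangle$ and exploit the duality between $B_{\infty}^{-r_{2},\infty}$ and $B_{1}^{r_{2},1}$ together with a product law to get
\[
|N(s)|\lesssim\|u_{2}\|_{B_{\infty}^{-r_{2},\infty}}\,\|w\|_{\dot H^{r_{2}}}\,\|\nabla w\|_{L^{2}} .
\]
Interpolating $\|w\|_{\dot H^{r_{2}}}\le\|w\|_{L^{2}}^{1-r_{2}}\|\nabla w\|_{L^{2}}^{r_{2}}$ and applying Young's inequality with exponents $\big(\tfrac{2}{1+r_{2}},\tfrac{2}{1-r_{2}}\big)$ lets me absorb a full $\|\nabla w\|_{L^{2}}^{2}$ into the dissipation, leaving the coefficient $\|u_{2}(s)\|_{B_{\infty}^{-r_{2},\infty}}^{2/(1-r_{2})}$, which is integrable in time precisely because $u_{2}\in L^{2/(1-r_{2})}([0,T],B_{\infty}^{-r_{2},\infty})$. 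The delicate point is the justification of the cross relation itself: the pairings $\langle u_{1}\otimes u_{1},\nabla u_{2}\rangle$ and $\langle u_{2}\otimes u_{2},\nabla u_{1}\rangle$ must be well defined although each $\nabla u_{i}$ lives only in a negative Besov space. Decomposing each product à la Bony, the para-product pieces are controlled by the individual conditions $r_{1},r_{2}<1$, while the resonant (remainder) piece, which pairs the two rough factors against one another, gains the single derivative of the convective term and so lands in a space of regularity $1-r_{1}-r_{2}$; summability of this resonance is exactly the requirement $r_{1}+r_{2}<1$. The time integrability is borderline throughout: the Hölder relation $\tfrac{1-r_{i}}{2}+\tfrac12+\tfrac{r_{i}}{2}=1$ shows no room is left, so the critical hypotheses are consumed in full.

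Once the energy inequality is reduced to
\[
\tfrac12\frac{d}{dt}\|w(t)\|_{L^{2}}^{2}+\tfrac12\|\nabla w\|_{L^{2}}^{2}\le C\,\|u_{2}(t)\|_{B_{\infty}^{-r_{2},\infty}}^{2/(1-r_{2})}\,\|w(t)\|_{L^{2}}^{2},
\]
with an $L^{1}([0,T])$ coefficient and $w(0)=0$, Gronwall's lemma forces $\|w(t)\|_{L^{2}}\equiv 0$, hence $u_{1}=u_{2}$. I expect the real obstacle to be the second paragraph rather than the Gronwall step: namely, proving that for two merely-Leray solutions the nonlinear cross terms are meaningful and that the para-product analysis closes with exactly the regularity budget $r_{1}+r_{2}<1$. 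Everything else — the energy bookkeeping, the interpolation, and Young's inequality — is routine once that para-product estimate is in hand.
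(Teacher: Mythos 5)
First, a point of orientation: this statement is nowhere proved in the paper you are being compared against --- it is imported verbatim from \cite{CMZ} (hence the attribution ``[See \cite{CMZ}, Theorem 1.4]''), and the paper's own uniqueness results (Theorems \ref{Th1} and \ref{Th2}) are obtained by a completely different, non-energy route: a regularity bootstrap (Theorem \ref{Th3}) run as a fixed-point argument in Chemin--Lerner spaces, followed by a Gronwall argument in $\mathbf{E}_{d}$. That route exists precisely because the relative-energy method you propose cannot be pushed beyond the range of \cite{CMZ}. So your sketch can only be measured against the argument of \cite{CMZ}, of which it is the natural outline (relative energy inequality for $w=u_{1}-u_{2}$, paraproduct estimate of the surviving trilinear term, Young, Gronwall); the problem is that its quantitative core does not hold.

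The gap is the displayed trilinear bound
\[
|N(s)|\lesssim\|u_{2}\|_{B_{\infty}^{-r_{2},\infty}}\,\|w\|_{\dot H^{r_{2}}}\,\|\nabla w\|_{L^{2}},
\]
which is false, together with the ``product law'' behind it. Duality requires $\|w\otimes w\|_{B_{1}^{1+r_{2},1}}\lesssim\|w\|_{H^{r_{2}}}\|\nabla w\|_{L^{2}}$, and Bony's decomposition shows this fails: the remainder $R(w,w)$ does gain derivatives, but the paraproduct (low--high) piece does not, since its generic block obeys only
\[
2^{j(1+r_{2})}\|S_{j-1}w\,\Delta_{j}w\|_{L^{1}}\le 2^{j(1+r_{2})}\|S_{j-1}w\|_{L^{2}}\|\Delta_{j}w\|_{L^{2}}\approx 2^{jr_{2}}c_{j}\,\|w\|_{L^{2}}\|\nabla w\|_{L^{2}},\qquad (c_{j})\in\ell^{2},
\]
whose sum over $j$ diverges for every $r_{2}>0$: the low-frequency factor, measured in $L^{2}$, contributes no decay, so a product of two energy-class functions is no smoother than its factors, and controlling this piece would require $w\in B_{2}^{1+r_{2},1}$, strictly beyond $L^{2}_{t}H^{1}_{x}$. (A concrete counterexample is a unit low-frequency bump plus a divergence-free oscillation of frequency $N$ and amplitude $N^{-1}$ in $w$, tested against a matching oscillation in $u_{2}$: the two sides of your inequality then differ by a factor $N^{r_{2}}$.) This is not a repairable detail of the write-up. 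If your estimate were true, your final differential inequality --- whose coefficient involves only $\|u_{2}(t)\|_{B_{\infty}^{-r_{2},\infty}}^{2/(1-r_{2})}$ --- would give weak--strong uniqueness against an arbitrary Leray solution for every $r_{2}<1$, a statement much stronger than the theorem of \cite{CMZ} and stronger than what the present paper achieves even after adding the hypotheses $d\le4$ (Theorem \ref{Th1}) or $u_{i}\in L^{p_{i}}L^{q_{i}}$ (Theorem \ref{Th2}).

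Relatedly, you have mislocated the role of $r_{1}+r_{2}<1$. Making the cross terms meaningful is the easier half: for $r_{i}>0$ the G\'erard--Meyer--Oru inequality of \cite{GMO} (used exactly this way in Section 3 of the paper) already gives $u_{i}\in L^{4}([0,T],L^{4})$, so all trilinear integrals converge absolutely and the energy equalities hold. The sum condition must instead be consumed in the quantitative estimate of the trilinear term itself, where the bad low--high interaction above has to be handled using the norms $\|u_{1}\|_{B_{\infty}^{-r_{1},\infty}}$ and $\|u_{2}\|_{B_{\infty}^{-r_{2},\infty}}$ simultaneously, and where $r_{1}+r_{2}<1$ is exactly what makes the resonant summation close. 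The telltale sign of the gap in your proposal is that the quantity $\|u_{1}\|_{B_{\infty}^{-r_{1},\infty}}$ never appears in your final Gronwall inequality.
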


As a consequence, the spaces $L^{\frac{2}{1-r}}([0,T],B_{\infty}^{-r,\infty
}(\mathbb{R}^{d})),$ with $r\in\lbrack0,\frac{1}{2}[,$ constitute a uniqueness
class of Leray weak solutions of (NS). In this paper, we extend this
uniqueness criteria to $r\in\lbrack\frac{1}{2},1]$; which gives a positive
answer to the question of Q. Chen, C. Miao, and Z. Zhang in [\cite{CMZ},
Remark 1.7].

Before setting our results, let us introduce the following notation.

\begin{notation}
Let $T>0$ and $r\in]0,1].$ We denote by $\mathcal{P}_{r,T}$ the space
$L^{\frac{2}{1-r}}([0,T],B_{\infty}^{-r,\infty}(\mathbb{R}^{d}))$ if $r\neq1$
and the space $C([0,T],B_{\infty}^{-1,\infty}(\mathbb{R}^{d}))$ if $r=1.$
\end{notation}

Now we are in position to cite our main results.

\begin{theorem}
\label{Th1}We suppose here that $d\leq4.$ Let $T>0$ and $u_{0}$ in
$L^{2}(\mathbb{R}^{d})$ with divergence free. If $u_{1}$ and $u_{2}$ are two
Leray weak solutions of the equations (NS) on $]0,T[$ such that $u_{1}%
\in\mathcal{P}_{r_{1},T}$ and $u_{2}\in\mathcal{P}_{r_{2},T}$ for some
$r_{1},r_{2}\in]0,1]$ then $u_{1}=u_{2}.$
\end{theorem}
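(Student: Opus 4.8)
The plan is to pass to the difference $w=u_{1}-u_{2}$ and close a dissipative energy estimate in a Sobolev space of \emph{negative} order, tuned to $(r_{1},r_{2},d)$. First I would note that $w$ is divergence free, belongs to the energy space $\mathcal{L}_{T}$, satisfies $w(0)=0$, and, writing $\mathbb{P}$ for the Leray projector, solves
\[
\partial_{t}w-\Delta w+\mathbb{P}\,\nabla(u_{1}\otimes w)+\mathbb{P}\,\nabla(w\otimes u_{2})=0 .
\]
Because $\overrightarrow{\nabla}.u_{1}=\overrightarrow{\nabla}.w=0$, the two nonlinear contributions are the \emph{transport} term $\mathbb{P}(u_{1}.\overrightarrow{\nabla})w$ and the \emph{stretching} term $\mathbb{P}(w.\overrightarrow{\nabla})u_{2}$; only the latter genuinely couples $w$ to the low regularity of a solution, while the former will be handled through a commutator.

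Second, I would explain why the plain $L^{2}$ estimate (the case $s=0$ below, which forces $r<\tfrac12$ as in Theorem 1.3) must be replaced. Fix a parameter $s\in(0,1)$ to be chosen and measure $w$ in $\dot{H}^{-s}$ (equivalently $\dot{B}_{2,\infty}^{-s}$). Pairing the equation with $\Lambda^{-2s}w$, where $\Lambda=(-\Delta)^{1/2}$, and using $\mathbb{P}w=w$, yields the identity
\[
\frac{1}{2}\frac{d}{dt}\|w\|_{\dot{H}^{-s}}^{2}+\|w\|_{\dot{H}^{1-s}}^{2}
=-\langle(u_{1}.\overrightarrow{\nabla})w,w\rangle_{\dot{H}^{-s}}-\langle(w.\overrightarrow{\nabla})u_{2},w\rangle_{\dot{H}^{-s}} ,
\]
the point being that dissipation now supplies the \emph{higher} norm $\|w\|_{\dot{H}^{1-s}}$, which is exactly the extra regularity needed to absorb the negative index $-r_{i}$ of the $u_{i}$. (The identity itself I would justify by mollifying in $x$ and passing to the limit, since a priori $w$ only lies in $\mathcal{L}_{T}$.)

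The heart of the argument — and the main obstacle — is the pair of trilinear estimates
\[
\bigl|\langle(u_{1}.\overrightarrow{\nabla})w,w\rangle_{\dot{H}^{-s}}\bigr|
+\bigl|\langle(w.\overrightarrow{\nabla})u_{2},w\rangle_{\dot{H}^{-s}}\bigr|
\lesssim\sum_{i=1,2}\|u_{i}\|_{\dot{B}_{\infty,\infty}^{-r_{i}}}\,\|w\|_{\dot{H}^{1-s}}^{1+r_{i}}\,\|w\|_{\dot{H}^{-s}}^{1-r_{i}} .
\]
To prove these I would integrate by parts so that the derivative falls on the smooth factor $\Lambda^{-2s}w$: for the stretching term, using $\overrightarrow{\nabla}.w=0$, this turns it into $\langle w\otimes\overrightarrow{\nabla}\Lambda^{-2s}w,\,u_{2}\rangle$, an $L^{2}$-type product paired against $u_{2}\in\dot{B}_{\infty,\infty}^{-r_{2}}$ through the duality $\dot{B}_{1,1}^{r_{2}}$--$\dot{B}_{\infty,\infty}^{-r_{2}}$; for the transport term it produces a commutator $[\Lambda^{-2s},u_{1}.\overrightarrow{\nabla}]w$ that recovers the missing derivative. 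Decomposing every product by Bony's paraproduct, the paraproduct pieces are harmless, while the \emph{remainder} (high--high) pieces converge only when the total regularity is positive; this is arranged by spending the higher norm $\|w\|_{\dot{H}^{1-s}}$, which produces the super-linear power $1+r_{i}$ above and constrains $s$ from above. The lower constraint on $s$ comes from the Bernstein and Sobolev embeddings in dimension $d$ that intervene in the low-frequency factors. I expect the scheme to hinge on these two constraints leaving a nonempty window for $s$, and it is precisely this compatibility — reflecting that the energy regularity $\dot{H}^{1}$ lies at or above the scaling-critical level $\dot{H}^{d/2-1}$ — that restricts the argument to $d\leq4$.

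Finally I would close the estimate. Applying Young's inequality with the conjugate exponents $\tfrac{2}{1+r_{i}}$ and $\tfrac{2}{1-r_{i}}$ absorbs $\tfrac12\|w\|_{\dot{H}^{1-s}}^{2}$ into the left-hand side and leaves
\[
\frac{d}{dt}\|w\|_{\dot{H}^{-s}}^{2}\leq C\bigl(\|u_{1}\|_{\dot{B}_{\infty,\infty}^{-r_{1}}}^{\frac{2}{1-r_{1}}}+\|u_{2}\|_{\dot{B}_{\infty,\infty}^{-r_{2}}}^{\frac{2}{1-r_{2}}}\bigr)\|w\|_{\dot{H}^{-s}}^{2} ,
\]
whose coefficient is integrable in time exactly because $u_{i}\in\mathcal{P}_{r_{i},T}=L^{2/(1-r_{i})}([0,T],B_{\infty}^{-r_{i},\infty})$ — the matching of the Young exponent with $2/(1-r_{i})$ is the reason the hypotheses are stated with that precise integrability. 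Since $w(0)=0$, Gronwall's lemma forces $w\equiv0$. The endpoint cases $r_{i}=1$, where $2/(1-r_{i})=\infty$, I would treat separately by a continuation argument: the definition $\mathcal{P}_{1,T}=C([0,T],B_{\infty}^{-1,\infty})$ supplies time-continuity, so on short subintervals the relevant norm is nearly constant and the same estimate closes, after which one propagates $w\equiv0$ across all of $[0,T]$.
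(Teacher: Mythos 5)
Your proposal is the Chen--Miao--Zhang energy/Gronwall argument transplanted from $L^{2}$ to $H^{-s}$, which is a genuinely different route from the paper's --- but it stalls at exactly the points that forced the restriction $r_{1}+r_{2}<1$ in \cite{CMZ}, and the negative index $s$ does not remove them. The decisive gap is the step you relegate to a parenthesis: justifying the $\dot H^{-s}$ energy identity for $w=u_{1}-u_{2}$ when $u_{1},u_{2}$ are merely Leray weak solutions. To apply the chain rule to $t\mapsto\|w(t)\|_{\dot H^{-s}}^{2}$ (Lions--Magenes type lemma, or your mollification limit) you need something like $\partial_{t}w\in L^{2}_{T}H^{-1-s}$, hence $u_{i}\otimes w\in L^{2}_{T}H^{-s}$. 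Writing this product with Bony's decomposition, the high-high remainder $R(u_{i},w)$ converges only if $w$ is placed in $H^{\theta}$ with $\theta>r_{i}$, while the time exponents ($u_{i}\in L^{2/(1-r_{i})}_{T}B^{-r_{i},\infty}_{\infty}$ and $w\in L^{2/\theta}_{T}H^{\theta}$ by interpolation of the Leray bounds) force $\theta\le r_{i}$. This clash is independent of $s$: the obstruction sits in the product $u_{i}\otimes w$ itself, not in what you pair it against, so lowering the index of the energy space buys nothing. Related criticality infects your trilinear bound: since the Young exponent must come out as exactly $2/(1-r_{i})$ to match the assumed integrability, the spatial estimate must hold at exactly the critical distribution of regularity, and there the duality $B^{r_{i}}_{1,1}$--$B^{-r_{i},\infty}_{\infty}$ requires an $\ell^{1}$ Besov sum that the paraproduct pieces (both $w$-factors in $\ell^{2}$-based spaces) give only up to a logarithmic divergence; calling them ``harmless'' skips the actual difficulty. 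Two further signs the scheme has not been carried through: your argument never genuinely uses $d\le4$ (the $L^{2}\times L^{2}\to L^{1}$ and Besov-duality structure is dimension-free), and the endpoint $r_{i}=1$ cannot be repaired by ``near constancy on short intervals'' --- there the Young exponent is $\infty$, closing the estimate requires $C\sup_{t}\|u_{i}(t)\|_{B^{-1,\infty}_{\infty}}<1$, i.e.\ smallness, and continuity in time gives no smallness whatsoever; this endpoint required the separate and much more delicate works \cite{Lem07} and \cite{May09}, which the paper invokes through Remark \ref{Rk12}.

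For contrast, the paper's proof avoids energy estimates on weak solutions altogether, and that is precisely how it gets past $r=\tfrac12$. Interpolation of the Leray bounds gives $u_{i}\in L^{2/r_{i}}_{T}H^{r_{i}}$, and the G\'erard--Meyer--Oru precise Sobolev inequality combined with $u_{i}\in\mathcal{P}_{r_{i},T}$ yields $u_{i}\in L^{4}([0,T],L^{4})$; this is where $d\le4$ enters, since Theorem \ref{Th2} is then applied with $p_{i}=q_{i}=4$ and needs $q_{i}\ge d$. Theorem \ref{Th2} in turn rests on the regularity result (Theorem \ref{Th3}): each $u_{i}$ is a smooth mild solution with $\sqrt{t}\,\|u_{i}(t)\|_{\infty}\to0$, after which uniqueness follows from the integral equation (NSI) by a Gronwall/iteration argument in $\mathbf{E}_{d}$, with no energy identity, no energy inequality, and no endpoint paraproduct issues. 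If you want to rescue an energy-type argument, you would at minimum have to first establish the regularity statement of Theorem \ref{Th3} --- at which point you are following the paper.
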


Thanks to the precise Sobolev inequalities proved by P.Gerard, Y. Meyer, and
F. Oru \cite{GMO}, the proof of the former theorem will be a consequence of
the following more general uniqueness result.

\begin{theorem}
\label{Th2}Let $T>0$, $(r_{1},r_{2})\in]0,1]^{2},$ and $(p_{i},q_{i}%
)_{i=1,2}\in\mathbb{R}^{2}$ such that, for each $i,q_{i}\geq d$ and $p_{i}%
\geq\frac{4}{1+r_{i}}$ if $r_{i}\neq1$ and $p_{i}>2$ if $r_{i}=1.$ If $u_{1}$
and $u_{2}$ are two Leray weak solutions on $]0,T[$ of the equations (NS)
associated to the same initial data $u_{0}$ such that, for $i\in\{1,2\},$
\[
u_{i}\in L^{p_{i}}([0,T],L^{q_{i}}(\mathbb{R}^{d}))\cap\mathcal{P}_{r_{i},T},
\]
then $u_{1}=u_{2}.$
\end{theorem}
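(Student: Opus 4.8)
The plan is to study the difference $w=u_{1}-u_{2}$ and to show, by an energy argument, that $\|w(t)\|_{L^{2}}$ stays zero. First I would subtract the two weak formulations of (NS): writing $u_{1}\otimes u_{1}-u_{2}\otimes u_{2}=w\otimes u_{1}+u_{2}\otimes w$ and using $\overrightarrow{\nabla}.w=0$, the function $w$ solves
\[
\partial_{t}w-\Delta w+(u_{1}.\overrightarrow{\nabla})w+(w.\overrightarrow{\nabla})u_{2}+\overrightarrow{\nabla}q=0,\qquad w(0)=0 .
\]
The formal energy identity (the term $\int(u_{1}.\overrightarrow{\nabla})w\cdot w$ vanishes since $u_{1}$ is divergence free) reads
\[
\tfrac{1}{2}\tfrac{d}{dt}\|w\|_{L^{2}}^{2}+\|\overrightarrow{\nabla}w\|_{L^{2}}^{2}=-\int_{\mathbb{R}^{d}}(w\otimes w):\overrightarrow{\nabla}u_{2}\,dx .
\]
The first, and most delicate, point is to make this identity rigorous: Leray weak solutions satisfy only the energy \emph{inequality}, so one must justify both that $w$ may serve as a test function and that the line above holds with equality. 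This is where the hypotheses $q_{i}\geq d$ and $p_{i}\geq 4/(1+r_{i})$ enter: after integrating by parts, $\int(w\otimes w):\overrightarrow{\nabla}u_{2}=-\int((w.\overrightarrow{\nabla})w)\cdot u_{2}$, and these exponents guarantee, via H\"{o}lder in space and time, that this is a genuinely integrable pairing, so that a standard mollification-in-time argument together with the Lions--Magenes lemma produces the energy identity for $w$.

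The analytic heart of the proof is the bilinear estimate
\[
\Big|\int_{\mathbb{R}^{d}}(w\otimes w):\overrightarrow{\nabla}u_{2}\,dx\Big|\leq C\,\|u_{2}\|_{B_{\infty}^{-r_{2},\infty}}\,\|w\|_{L^{2}}^{1-r_{2}}\,\|\overrightarrow{\nabla}w\|_{L^{2}}^{1+r_{2}} ,
\]
which I would establish with Bony's para-product. Regarding the integral as the duality pairing of $w\otimes w\in\dot{B}_{1}^{1+r_{2},1}$ against $\overrightarrow{\nabla}u_{2}\in\dot{B}_{\infty}^{-(1+r_{2}),\infty}$, and noting $\|\overrightarrow{\nabla}u_{2}\|_{\dot{B}_{\infty}^{-(1+r_{2}),\infty}}\lesssim\|u_{2}\|_{B_{\infty}^{-r_{2},\infty}}$, it suffices to bound $\|w\otimes w\|_{\dot{B}_{1}^{1+r_{2},1}}$. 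The homogeneous product law $\dot{B}_{2}^{s,2}\cdot\dot{B}_{2}^{s,2}\hookrightarrow\dot{B}_{1}^{2s,1}$ with $s=(1+r_{2})/2$ gives $\|w\otimes w\|_{\dot{B}_{1}^{1+r_{2},1}}\lesssim\|w\|_{\dot{H}^{(1+r_{2})/2}}^{2}$, and the interpolation inequality $\|w\|_{\dot{H}^{(1+r_{2})/2}}\lesssim\|w\|_{L^{2}}^{(1-r_{2})/2}\|\overrightarrow{\nabla}w\|_{L^{2}}^{(1+r_{2})/2}$ yields the claim. It is precisely here that the method surpasses the range $r<\tfrac12$ of \cite{CMZ}: the decisive para-product interaction only ever calls on $w$ at the regularity $\dot{H}^{(1+r_{2})/2}$, which stays strictly below $\dot{H}^{1}$ for every $r_{2}<1$, so the whole interval $]0,1[$ is reached with no further restriction.

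With the bilinear estimate in hand, Young's inequality with the conjugate exponents $\tfrac{2}{1+r_{2}}$ and $\tfrac{2}{1-r_{2}}$ absorbs the viscous term and leaves
\[
\tfrac{d}{dt}\|w\|_{L^{2}}^{2}\leq C\,\|u_{2}\|_{B_{\infty}^{-r_{2},\infty}}^{2/(1-r_{2})}\,\|w\|_{L^{2}}^{2} .
\]
Since $u_{2}\in\mathcal{P}_{r_{2},T}$ means exactly that $t\mapsto\|u_{2}(t)\|_{B_{\infty}^{-r_{2},\infty}}^{2/(1-r_{2})}$ lies in $L^{1}([0,T])$, Gr\"{o}nwall's lemma and $w(0)=0$ force $w\equiv0$, which is the assertion. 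The endpoint $r_{2}=1$ must be treated separately, since then $2/(1-r_{2})=\infty$; there $u_{2}\in C([0,T],B_{\infty}^{-1,\infty})$ together with $w(0)=0$ makes $\|u_{2}\|_{B_{\infty}^{-1,\infty}}$ small on a short initial interval, closing the estimate by a continuity/bootstrap argument and then propagating uniqueness to all of $[0,T]$.

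I expect the genuine obstacle to be the rigorous justification of the energy identity at this low level of regularity: the para-product estimate itself is robust, but turning the \emph{formal} computation into a valid one for objects that are merely Leray solutions — controlling the time-derivative pairing and the pressure, and verifying that the $L^{p_{i}}L^{q_{i}}$ assumptions truly license using $w$ as a test function — is where the care concentrates. The borderline cases, namely the endpoint $r_{2}=1$ and the low-dimensional corner where the product law $\dot{B}_{2}^{s,2}\cdot\dot{B}_{2}^{s,2}\hookrightarrow\dot{B}_{1}^{2s,1}$ degenerates, will each require a dedicated argument.
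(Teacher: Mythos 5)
Your proposal takes the energy--method route of Chen--Miao--Zhang rather than the paper's route, and it breaks down at its central step: the bilinear estimate
\[
\Bigl|\int_{\mathbb{R}^{d}}(w\otimes w):\overrightarrow{\nabla}u_{2}\,dx\Bigr|\lesssim\left\Vert u_{2}\right\Vert _{B_{\infty}^{-r_{2},\infty}}\left\Vert w\right\Vert _{L^{2}}^{1-r_{2}}\left\Vert \overrightarrow{\nabla}w\right\Vert _{L^{2}}^{1+r_{2}}
\]
is false, and the product law $\dot{B}_{2}^{s,2}\cdot\dot{B}_{2}^{s,2}\hookrightarrow\dot{B}_{1}^{2s,1}$ invoked to prove it is false for every $s>0$: in Bony's decomposition only the remainder enjoys additivity of regularities, while a paraproduct term $T_{f}g$ carries only the regularity of its high-frequency factor, so $fg$ has $s$ derivatives in $L^{1}$, not $2s$. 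Concretely (schematically; the divergence-free constraint is a lower-order correction), take a fixed bump $\psi$ and set
\[
w=\psi\,e_{1}+2^{-N/2}\psi\cos(2^{N}x_{1})\,e_{2},\qquad u_{2}=\psi\sin(2^{N}x_{1})\,e_{2}.
\]
The left-hand side is of order $2^{-N/2}\cdot 2^{N}=2^{N/2}$ (the low part of $w$ times the high part of $w$ pairs against $\partial_{1}(u_{2})_{2}$ with no cancellation), while $\left\Vert w\right\Vert _{2}\approx1$, $\left\Vert \overrightarrow{\nabla}w\right\Vert _{2}\approx2^{N/2}$ and $\left\Vert u_{2}\right\Vert _{B_{\infty}^{-r_{2},\infty}}\approx2^{-Nr_{2}}$, so the right-hand side is of order $2^{N/2-Nr_{2}/2}$; the ratio blows up like $2^{Nr_{2}/2}$. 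The guilty interaction is exactly the low--high one: when $\overrightarrow{\nabla}u_{2}$ rides at frequency $2^{k}$, the product $S_{k-1}w\,\Delta_{k}w$ measured with energy norms decays only like $2^{-k}$ (the low factor $S_{k-1}w$ contributes no decay at all), which cannot pay the $2^{k(1+r_{2})}$ demanded by $\Vert\tilde{\Delta}_{k}\overrightarrow{\nabla}u_{2}\Vert_{\infty}$; one loses $2^{kr_{2}}$. This is not a technicality that mollification repairs: it is precisely the obstruction behind the restriction $r_{1}+r_{2}<1$ in \cite{CMZ}. The frequency sum can only be restored by measuring $S_{k-1}w$ in a Besov norm, but that replaces one power of $w$ by a power of $u_{1}$ or $u_{2}$ and leaves a Gronwall inequality sublinear in $\left\Vert w\right\Vert _{2}^{2}$, from which $w\equiv0$ does not follow. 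A sanity check confirms all this: were your estimate true, your argument would give uniqueness with essentially no hypothesis on $u_{1}$ beyond Leray membership and for the whole range $r_{2}\in]0,1[$, so the question of \cite{CMZ} that this paper answers would never have been open.

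Two further points. First, in this paper a ``Leray weak solution'' is merely a weak solution lying in $\mathcal{L}_{T}$; the energy inequality is not part of the definition, so it is not available as a starting point, and your claim that $q_{i}\geq d$, $p_{i}\geq4/(1+r_{i})$ make $\int((w.\overrightarrow{\nabla})w)\cdot u_{2}$ time-integrable already fails at the endpoint $q_{2}=d$: H\"older gives $\Vert\overrightarrow{\nabla}w\Vert_{2}^{2}\left\Vert u_{2}\right\Vert _{d}$, the product of an $L_{t}^{1}$ function with an $L_{t}^{p_{2}}$ function, $p_{2}<\infty$ (note these exponents are supercritical, $2/p_{i}+d/q_{i}>1$ being allowed, so no Prodi--Serrin argument applies). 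Second, and this is the real lesson, the paper's proof of Theorem \ref{Th2} is not an energy argument at all: it first proves the regularity Theorem \ref{Th3}, namely that any weak solution in $L^{p}L^{q}\cap\mathcal{P}_{r,T}$ satisfies $\sqrt{t}\,u\in L_{T}^{\infty}L^{\infty}$ with $\sqrt{t}\left\Vert u(t)\right\Vert _{\infty}\rightarrow0$, via a fixed-point and persistency analysis in Chemin--Lerner spaces (Propositions \ref{Pr41}, \ref{Pr42}, \ref{Pr43}); it then treats $u_{1},u_{2}$ as mild solutions, writes $u=u_{1}-u_{2}=\mathbf{B}(u_{1},u)+\mathbf{B}(u,u_{2})$, derives
\[
\left\Vert u(t)\right\Vert _{\mathbf{E}_{d}}\leq C\,\omega(t)\int_{0}^{t}\frac{\left\Vert u(s)\right\Vert _{\mathbf{E}_{d}}}{\sqrt{t-s}\sqrt{s}}ds,\qquad\omega(\delta)\rightarrow0\text{ as }\delta\rightarrow0,
\]
and closes by absorption on a short interval followed by iteration in time. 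Passing through pointwise bounds on the solutions themselves, rather than through $L^{2}$ estimates on their difference, is what unlocks the range $r\geq\frac{1}{2}$; if you want to prove Theorem \ref{Th2}, that is the structure to follow.
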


The proof of this theorem repose essentially on the following regularity result.

\begin{theorem}
\label{Th3}Let $T>0$, $q\geq d,r\in]0,1],$ and $p\geq\frac{4}{1+r}$ such that
$p>2$ if $r=1.$ If $u\in L^{p}([0,T],L^{q}(\mathbb{R}^{d}))\cap\mathcal{P}%
_{r,T}$ is a weak solution of (NS) on $]0,T[,$ then $\sqrt{t}u\in L^{\infty
}([0,T],L^{\infty}(\mathbb{R}^{d}))$ and $\sqrt{t}\left\Vert u(t)\right\Vert
_{\infty}$ tends to $0$ as $t\rightarrow0.$
\end{theorem}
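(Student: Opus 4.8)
The plan is to pass from the weak formulation to the Duhamel (mild) integral equation and then run a parabolic bootstrap based on the smoothing of the heat semigroup, Bony's para\-product, and the two a priori integrability classes of $u$. First I would observe that, since $u\in L^{p}([0,T],L^{q})$ with $q\geq d$, the tensor $u\otimes u$ belongs to $L^{p/2}([0,T],L^{q/2})$ and in particular to $L^{1}_{loc}$; this legitimizes recovering the pressure through the Leray projector $\mathbb{P}$ and writing, for $0<t_{0}<t<T$,
\[
u(t)=e^{(t-t_{0})\Delta}u(t_{0})-\int_{t_{0}}^{t}e^{(t-s)\Delta}\mathbb{P}\,\mathrm{div}\,(u\otimes u)(s)\,ds .
\]
Starting the evolution from a positive time $t_{0}$, rather than from $u_{0}\in L^{2}$, is essential: the datum $u(t_{0})$ carries the subcritical regularity $B_{\infty}^{-r,\infty}$, whereas $e^{t\Delta}u_{0}$ alone would already produce the wrong rate in dimension $d\geq 3$.

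The core of the argument is a bilinear estimate. I would use the heat-semigroup characterization of Besov norms, in the form $\|e^{\tau\Delta}\mathbb{P}\,\mathrm{div}\,F\|_{B_{\infty}^{\alpha,\infty}}\lesssim \tau^{-(1+\alpha+r)/2}\|F\|_{B_{\infty}^{-r,\infty}}$, together with Bony's decomposition $fg=T_{f}g+T_{g}f+R(f,g)$ to control the product $u\otimes u$ in the negative space $B_{\infty}^{-r,\infty}$. I would then measure the solution in a Kato-type weighted space with norm $\sup_{s}s^{(\alpha+r)/2}\|u(s)\|_{B_{\infty}^{\alpha,\infty}}$, pairing the bootstrapped factor, controlled in $B_{\infty}^{\alpha,\infty}$, against the base factor whose $B_{\infty}^{-r,\infty}$ norm lies in $L^{2/(1-r)}_{t}$, and closing the resulting time integral by H\"older's inequality. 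Raising $\alpha$ step by step up to $\alpha=1-r>0$ yields $u(s)\in B_{\infty}^{1-r,\infty}\hookrightarrow L^{\infty}$ with $\|u(s)\|_{\infty}\lesssim s^{-1/2}$, which is precisely the asserted bound $\sqrt{t}\,u\in L^{\infty}([0,T],L^{\infty})$. The convergence $\sqrt{t}\|u(t)\|_{\infty}\to 0$ as $t\to 0$ would then follow from the absolute continuity of the integrals $\int_{0}^{\delta}\|u(s)\|_{B_{\infty}^{-r,\infty}}^{2/(1-r)}\,ds$ and $\int_{0}^{\delta}\|u(s)\|_{q}^{p}\,ds$, which vanish as $\delta\to 0$ and furnish the smallness needed to close a fixed point on a short interval $(0,\delta)$.

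The main obstacle is the para\-product remainder $R(u,u)$: it lands in $B_{\infty}^{-r,\infty}$ only when the gained regularity strictly exceeds $r$, while the time-singularity of the heat kernel remains integrable only when that regularity stays below $1-r$. These two requirements are reconciled by a single naive estimate exactly when $r<\tfrac{1}{2}$, which is the range already covered by Chen--Miao--Zhang. For $r\in[\tfrac{1}{2},1]$ --- the genuinely new case --- I expect the hypothesis $u\in L^{p}L^{q}$ with $q\geq d$ to be indispensable: via the embedding $L^{q/2}\hookrightarrow B_{\infty}^{-2d/q,\infty}$ and the refined Sobolev inequalities of G\'{e}rard--Meyer--Oru, it supplies the intermediate regularity that allows the remainder term to be absorbed and the bootstrap to be restarted. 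Finally, the endpoint $r=1$, where $\mathcal{P}_{r,T}=C([0,T],B_{\infty}^{-1,\infty})$ and the target exponent $\alpha=1-r=0$ no longer embeds into $L^{\infty}$, would be handled by a separate limiting argument exploiting the continuity in time and the strict inequality $p>2$.
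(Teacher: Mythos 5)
Your proposal assembles the right tools (mild formulation, Bony paraproduct, heat smoothing, Kato-type weighted norms) and, to your credit, names exactly the obstruction that makes the theorem nontrivial: a pointwise-in-time bootstrap needs the gained regularity $\sigma$ to satisfy both $\sigma>r$ (so the paraproduct terms land in a Besov space of positive index) and $\sigma<1-r$ (so the time singularity of the kernel is integrable), and this window is empty precisely when $r\geq\frac{1}{2}$, which is the only case at issue. But you do not resolve the obstruction; you only express the expectation that the hypothesis $u\in L^{p}L^{q}$ will ``supply the intermediate regularity'' via $L^{q/2}\hookrightarrow B_{\infty}^{-2d/q,\infty}$ and the G\'erard--Meyer--Oru inequalities. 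That sentence is where the whole theorem lives, and the mechanism you name is not the one that works: for $q$ close to $d$ the exponent $-2d/q$ is close to $-2$, far below scaling, and in the paper the G\'erard--Meyer--Oru inequality enters only in the deduction of Theorem \ref{Th1} from Theorem \ref{Th2}, not in the proof of this statement. The actual fix is a change of function spaces, not a sharper pointwise-in-time estimate: the bootstrap is run in Chemin--Lerner spaces $\mathbf{\tilde{L}}_{T_{0}}^{\rho}(B_{q}^{\sigma,\infty})$, keeping the $L^{q}$ spatial integrability instead of converting it into negative regularity. There the Oseen operator is estimated frequency by frequency \emph{after} the time norm is taken (Proposition \ref{Pr25}), which removes the ceiling $\sigma<1-r$ altogether; only the floor $\sigma>r$ from the paraproduct survives (Proposition \ref{Pr23}, Lemma \ref{L41}), and one reaches $\mathbf{\tilde{L}}_{T_{0}}^{2/(1+r)}(B_{q}^{1+r,\infty})$ in one stroke via a fixed point transferred between two Banach spaces (Lemmas \ref{L41}--\ref{L43}).

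Second, and independently, your bootstrap is circular as stated: you run the absorption argument on the weak solution $u$ itself, but an inequality of the form $h\leq C\Theta+C\Theta h$ yields $h\leq 2C\Theta$ only if one already knows $h<\infty$, and the finiteness of Kato-weighted norms of a mere weak solution is exactly what is unknown. The paper's architecture exists to supply this: for a.e.\ $t_{0}$ one has $u(t_{0})\in L^{q}\cap B_{\infty}^{-r,\infty}$; Picard iteration launched from this data produces a maximal regular solution $v$ whose weighted norms are finite by construction (Proposition \ref{Pr42}); a genuine local uniqueness theorem in the class $L_{T}^{2/(1-r)}(B_{\infty}^{-r,\infty})\cap L_{T}^{p}L_{x}^{q}$ (Proposition \ref{Pr41}) identifies $u(\cdot+t_{0})$ with $v$; and the blow-up criterion (\ref{42}), played against the hypothesis $u\in L_{T}^{2/(1-r)}(B_{\infty}^{-r,\infty})$, shows $v$ cannot blow up before $T-t_{0}$, so the identification propagates to the whole interval. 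Only after this weak--strong identification, when $u$ is known to lie in $C(]0,T],B_{\infty}^{1,\infty})$, does the paper run the Kato-weighted estimate you describe (Proposition \ref{Pr43}), with the additional device of choosing $a\in[\frac{t}{4},\frac{t}{2}]$ minimizing $\left\Vert u_{n}(a)\right\Vert _{B_{\infty}^{-r,\infty}}$ so that the linear term is controlled by the $L^{2/(1-r)}$ average $\Theta(\delta)$ and the conclusion is $o(t^{-1/2})$ rather than merely $O(t^{-1/2})$. Your plan omits the uniqueness step entirely, and without it the regularity (hence the bound and the decay) does not follow. Finally, for $r=1$ there is no ``separate limiting argument'' in the paper: that case is quoted from earlier work of Lemari\'e-Rieusset and of the author (Remark \ref{Rk12}).
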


\begin{remark}
\label{Rk11}This theorem implies, in particular, that every weak solution $u$ of the
equations of Navier-Stokes which belongs to the space $L^{p}([0,T],L^{q}%
(\mathbb{R}^{d}))\cap\mathcal{P}_{r,T}$ is a classical solutions of (NS) i.e.
$u\in C^{\infty}(Q_{T})$ (see the proof of \ this result in the last section
of this paper).
\end{remark}

\begin{remark}
\label{Rk12}In the case where $r=1$, the theorem \ref{Th3} has been recently proved by P.
G. Lemarie-Rieusset \cite{Lem07} when $q>d$ and by the author of this paper
\cite{May09} when $q=d.$ Therefore, we will prove the theorem \ref{Th3} only
in the case $r\in]0,1[.$
\end{remark}

\begin{remark}
P. Germain \cite{Ger} proved the uniqueness of Leray weak solutions of (NS) in
the class $L^{\frac{2}{1-r}}([0,T],X_{r})$ with $r\in\lbrack-1,1[$ and
\[
X_{r}=\left\{
\begin{array}
[c]{c}%
M~(H^{r},L^{2}),\text{ if }r\in]0,1[,\\
\Lambda^{r}BMO,\text{if }r\in]-1,0],\\
Lip,\text{ if }r=-1,
\end{array}
\right.
\]
where $Lip=\{f:\mathbb{R}^{d}\rightarrow\mathbb{R}:\left\Vert f\right\Vert
_{Lip}\equiv\sup_{x\neq y}\frac{\left\vert f(x)-f(y)\right\vert }{\left\Vert
x-y\right\Vert }<\infty\},$ $\Lambda^{r}=(I-\Delta)^{\frac{r}{2}}$ and
$M~(H^{r},L^{2})$ is the space of functions $f\in L_{loc}^{2}(\mathbb{R}^{d})$
such that, for every $g\in H^{r}(\mathbb{R}^{d}),$ $fg\in L^{2}(\mathbb{R}%
^{d}).$ The space $M~(H^{r},L^{2})$ is endowed with the norm
\[
\left\Vert f\right\Vert _{M~(H^{r},L^{2})}=\sup_{\left\Vert g\right\Vert
_{H^{r}(\mathbb{R}^{d})}\leq1}\left\Vert fg\right\Vert _{L^{2}(\mathbb{R}%
^{d})}.
\]
Since $X_{r}\hookrightarrow B_{\infty}^{-r,\infty},$ Theorem \ref{Th1} of the
present paper combined with Theorem 1.2 and Theorem 1.4 in \cite{CMZ} extend
the uniqueness class of P. Germain.
\end{remark}

\begin{remark}
H. Miura \cite{Miu} proved the uniqueness of the weak solutions of the
equations (NS) which belongs to the space%
\[
\mathcal{M}_{T}=L^{2}([0,T],L_{uloc}^{2})\cap C([0,T],vmo^{-1})\cap
L_{loc}^{\infty}(]0,T],L^{\infty}),
\]
where $vmo^{-1}$ is the space of $f\in S^{\prime}(\mathbb{R}^{d})$ satisfying%
\[
\forall T>0,~\left\Vert f\right\Vert _{BMO_{T}^{-1}}\equiv\sup_{x_{0}\in
R^{d},~0<R^{2}<T}R^{-\frac{d}{2}}\left(  \int_{[0,R^{2}]\times B(x_{0}%
,R)}\left\vert e^{t\Delta}f\right\vert ^{2}dtdy\right)  ^{\frac{1}{2}}<+\infty
\]
and%
\[
\lim_{T\rightarrow0}\left\Vert f\right\Vert _{BMO_{T}^{-1}}=0.
\]
The space $vmo^{-1}$ is endowed with the norm $\left\Vert .\right\Vert
_{BMO_{T}^{-1}}$ where $T$ is a fixed non negative real number.
\end{remark}

Theorem \ref{Th1} extends this uniqueness result; in fact since $vmo^{-1}%
\hookrightarrow B_{\infty}^{-1,\infty}$ then every weak solution of the
Navier-Stokes equations in the space%
\[
\mathcal{M}_{T}^{p,q}\equiv L^{2}([0,T],L_{uloc}^{2})\cap C([0,T],vmo^{-1}%
)\cap L_{loc}^{p}(]0,T,L^{q}),
\]

with $p>2$ and $q\geq d,$ belongs to Miura's space $\mathcal{M}_{T}.$
Therefore, the family of spaces $(\mathcal{M}_{T}^{p,q})_{p>2,~q\geq d}$
constitute a uniqueness class of weak solutions of Navier-Stokes equations.

The remaining of this paper is organized as follows: in the next section, we
first recall the notion of mild solutions of (NS) introduced in \cite{FLT},
then we cite some useful properties of the Besov spaces and the Chemin-Lerner
spaces. In the third section, we prove how the main theorem \ref{Th3} implies
Theorem \ref{Th2} and Theorem \ref{Th1}. The last section is devoted to the
proof of Theorem \ref{Th3} in the case $r\in]0,1[.$

\section{Preliminairies}

\subsection{Notations}

(1) In this paper, all the functional spaces are defined on the whole space
$\mathbb{R}^{d}.$ Then, in order to simplify the notations, we will design,
for instance, the spaces $L^{q}(\mathbb{R}^{d}),~H^{s}(\mathbb{R}^{d}),$ and
$B_{q}^{s,p}(\mathbb{R}^{d})$ respectively by $L^{q},~H^{s},$ and $B_{q}%
^{s,p}.$

(2) If $X$ is a vector space and $n\in\mathbb{N},$ we often write
$(f_{1},\cdots,f_{n})\in X$ in place of $(f_{1},\cdots,f_{n})\in X^{n}.$

(3) If $X$ is a Banach space, $T>0,$ and $p\in\lbrack1,+\infty],$ we denote by
$L_{T}^{p}(X)$ or $L_{T}^{p}X$ the space $L^{p}([0,T],X).$

(4) Let $p\geq1.$ We design by $\mathbf{E}_{p}$ the space of functions $f\in
L_{loc}^{p}(\mathbb{R}^{d})$ such that
\[
\left\Vert f\right\Vert _{\mathbf{E}_{p}}\equiv\sup_{x_{0}\in\mathbb{R}^{d}%
}\left\Vert 1_{B(x_{0},1)}f\right\Vert _{p}<\infty\text{ and }\lim_{\left\Vert
x_{0}\right\Vert \rightarrow\infty}\left\Vert 1_{B(x_{0},1)}f\right\Vert
_{p}=0.
\]

(5) If $A$ and $B$ are two real valued functions, the notation $A\lesssim B$
means that there exists an absolute non negative real constant $\alpha$ such
that $A\leq\alpha B.$

\subsection{Mild solutions of the Navier-Stokes equations}

We denote by $\mathbb{P}$ the Leray projector on the space of distributions
with divergence free. We recall that $(\mathbb{P}_{ij})_{1\leq i,j\leq d}$ is
defined via Riesz transformations $(\mathcal{R}_{i})_{1\leq i\leq d}$ by the relation:%

\[
\mathbb{P}_{ij}(f)=\delta_{ij}f-\mathcal{R}_{i}\mathcal{R}_{j}(f)
\]
where $\delta_{ij}$ is the Kronecker symbol.

Let $u_{0}=(u_{01},\cdots,u_{0d})\in S^{\prime}(\mathbb{R}^{d})$ a tempered
distribution with divergence free. By applying formally the Leray operator
$\mathbb{P}$ to the equations (NS) we obtain the following system:%

\[
\left\{
\begin{array}
[c]{c}%
u_{t}-\Delta u=-\mathbb{P}\nabla(u\otimes u),\\
u(0,.)=u_{0}(.).
\end{array}
\right.
\]
Next, using Duhamel formula we transform this system to the integral equations%

\begin{equation}
u(t)=e^{t\Delta}u_{0}+\mathbf{B}(u,u)(t), \tag{NSI}%
\end{equation}
where $(e^{t\Delta})_{t\geq0}$ is the heat semi-group and $\mathbf{B}$ is the
bilinear application defined by:%

\[
\mathbf{B}(u,v)=\mathbb{L}_{Oss}(u\otimes v).
\]
The operator $\mathbb{L}_{Oss}$ , called the Oseen integral operator, is given
by%
\begin{equation}
\mathbb{L}_{Oss}(f)(t)=-\int_{0}^{t}e^{(t-s)\Delta}\mathbb{P}\nabla(f)ds.
\label{21}%
\end{equation}
In \cite{FLT}, G. Furioli, P. G. Lemarie-Rieusset, and F. Terraneo proved that
for the solutions class $L_{loc}^{2}([0,T[,\mathbf{E}_{2})$ the equations (NS0
and (NSI) are equivalents. This leads us to introduce the following notion of
mild solutions which we adopt in this paper.

\begin{definition}
Let $T>0$ and $u_{0}\in S^{\prime}(\mathbb{R}^{d}).$ A mild solution of the
Navier-Stokes equations on $]0,T[$ is a function $u\in L_{loc}^{2}%
([0,T[,\mathbf{E}_{2})$ which satisfies, for every $t\in\lbrack0,T[,$ the
integral equations (NSI).
\end{definition}

\begin{remark}
\label{Rk21}It is well-known (see for instance \cite{FLT} and \cite{Lem02}) that every
mild solution $u$ of the equations (NS) on the interval $]0,T[$ belongs to
the space $C([0,T[,B_{\infty}^{-d-1,\infty}).$
\end{remark}

\begin{remark}
All solutions of the Navier-Stokes considered in this paper are mild
solutions; therefore, in the sequel, if $u\in L_{loc}^{2}([0,T[,\mathbf{E}%
_{2}),$ then the short sentence "$u$ is a solution of the equations (NS)"
means that $u$ is a mild solution on $]0,T[$ of the equations (NS).
\end{remark}

\begin{remark}
\label{Rk23}Let $u$ be a mild solution $]0,T[$ of the equations (NS). Using the semi-group
property of $(e^{t\Delta})_{t\geq0}$, one can easily verify that for every
$0<t_{0}\leq t<T$%
\[
u(t)=e^{(t-t_{0})\Delta}u(t_{0})-\int_{t_{0}}^{t}e^{(t-s)\Delta}%
\mathbb{P}\nabla(u\otimes u)ds.
\]
This implies that the function $u_{t_{0}}\equiv u(.+t_{0})$ is a mild solution
on $]0,T-t_{0}[$ of the Navier-Stokes equations associated to the initial data
$u(t_{0}).$
\end{remark}

\begin{remark}
In the sequel of this work, the hypothesis of free divergence of the solutions
$u$ of the Navier-Stokes equations (NS) will play no role.
\end{remark}

\subsection{Besov spaces and Chemin-Lerner spaces}

let us first recall the Littlewood-Paley decomposition. Let $\varphi\in
C_{c}^{\infty}(\mathbb{R}^{d})$ which is equal to $1$ on a neighbourhood of
the origin. Next we define the function $\psi\in C_{c}^{\infty}(\mathbb{R}%
^{d}\backslash\{0\})$ by $\psi(\xi)=\varphi(\frac{\xi}{2})-\varphi(\xi).$ For
every $j\in\mathbb{N}\cup\{0\},$ we design by $S_{j}$ and $\Delta_{j}$ the
operators defined on $S^{\prime}(\mathbb{R}^{d})$ and $S^{\prime}%
(\mathbb{R}\times\mathbb{R}^{d})$ by
\begin{align*}
S_{j}f  &  =\mathcal{F}_{x}^{-1}(\varphi(\frac{\xi}{2^{j}})\mathcal{F}%
_{x}(f)),\\
\Delta_{j}f  &  =\mathcal{F}_{x}^{-1}(\psi(\frac{\xi}{2^{j}})\mathcal{F}%
_{x}(f)),
\end{align*}
where $\mathcal{F}_{x}$ and $\mathcal{F}_{x}^{-1}$ are respectively the
Fourier transformation with respect to the space variable $x\in\mathbb{R}^{d}$
and its inverse transformation.

\begin{notation}
In the sequel, we often denote the operator $S_{0}$ by $\Delta_{-1}.$
\end{notation}

Now we can recall the definition of a class of Besov spaces.

\begin{definition}
Let $s\in\mathbb{R}$ and $q\in\lbrack1,+\infty].$ The Besov space
$B_{q}^{s,\infty}$ is the space of $f\in S^{\prime}(\mathbb{R}^{d})$ such that%
\[
\left\Vert f\right\Vert _{B_{q}^{s,\infty}}\equiv\sup_{j\geq-1}2^{si}%
\left\Vert \Delta_{j}f\right\Vert _{q}<\infty.
\]
We design by $\tilde{B}_{q}^{s,\infty}$ the closure of $S(\mathbb{R}^{d})$ in
$B_{q}^{s,\infty}.$
\end{definition}

We introduce now the definition of a class of Chemin-Lerner spaces
[\cite{Dan}, \cite{Che}, \cite{CL}].

\begin{definition}
Let $T>0,~s\in\mathbb{R}$ and $p,q\in\lbrack1,+\infty].$ The Chemin-Lerner
space $\tilde{L}_{T}^{p}B_{q}^{s,\infty}$ is the space of $v\in S^{\prime
}(\mathbb{R}\times\mathbb{R}^{d})$ such that%
\[
\left\Vert v\right\Vert _{\tilde{L}_{T}^{p}B_{q}^{s,\infty}}\equiv\sup
_{j\geq-1}2^{si}\left\Vert \Delta_{j}v\right\Vert _{L_{T}^{p}L_{x}^{q}}%
<\infty.
\]
We design by $\mathbf{\tilde{L}}_{T}^{p}B_{q}^{s,\infty}$ the space of
$v\in\tilde{L}_{T}^{p}B_{q}^{s,\infty}$ such that%
\[
\lim_{T\rightarrow0}\left\Vert v\right\Vert _{\tilde{L}_{T}^{p}B_{q}%
^{s,\infty}}=0.
\]

\end{definition}

The following proposition gathers some simple and useful properties of Besov
and Chemin-Lerner spaces.

\begin{proposition}
\label{Pr21}Let $T>0,s\in\mathbb{R}$, $(p,q)\in\lbrack1,+\infty],$ and $p_{1}\in
\lbrack1,+\infty\lbrack.$ The following assertions hold true:
\begin{enumerate}
\item[1)] $L_{T}^{p}B_{q}^{s,\infty}\hookrightarrow\tilde{L}_{T}^{p}%
B_{q}^{s,\infty},~L_{T}^{\infty}B_{q}^{s,\infty}=\tilde{L}_{T}^{\infty}%
B_{q}^{s,\infty},$ and $L_{T}^{p_{1}}B_{q}^{s,\infty}\hookrightarrow
\mathbf{\tilde{L}}_{T}^{p_{1}}B_{q}^{s,\infty}.$

\item[2)] The linear operators $\mathbb{P}_{ij}\frac{\partial}{\partial
x_{k}}$ are continuous from $B_{q}^{s,\infty}$ (respectively $L_{T}^{p}%
B_{q}^{s,\infty}$) to $B_{q}^{s-1,\infty}$ (respectively $L_{T}^{p}%
B_{q}^{s-1,\infty}$).

\item[3)] (Bernestein's inequality) For every $m\in\lbrack q,+\infty],$ we
have%
\[
B_{q}^{s,\infty}\hookrightarrow B_{q}^{s+d(\frac{1}{m}-\frac{1}{q}),\infty
}\text{ and }\tilde{L}_{T}^{p}B_{q}^{s,\infty}\hookrightarrow\tilde{L}_{T}%
^{p}B_{q}^{s+d(\frac{1}{m}-\frac{1}{q}),\infty}.
\]
\end{enumerate}
\end{proposition}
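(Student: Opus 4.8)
The three assertions are all proved by reducing to an estimate on each Littlewood--Paley block $\Delta_j v$ and then passing to the supremum over $j$, so I would dispatch them in turn. For assertion 1), the first embedding is the observation that the frequency supremum may be pulled out of the time integral: fixing $j$ and using $2^{sj}\|\Delta_j v(t)\|_q\le\|v(t)\|_{B_q^{s,\infty}}$ for a.e.\ $t$, I take the $L_T^p$ norm to get $2^{sj}\|\Delta_j v\|_{L_T^pL_x^q}\le\|v\|_{L_T^pB_q^{s,\infty}}$, and then take $\sup_j$; this is just the monotonicity inequality $\sup_j\|f_j\|_{L_T^p}\le\|\sup_j|f_j|\|_{L_T^p}$. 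When $p=\infty$ the two suprema (in $t$ and in $j$) commute, giving the equality $L_T^\infty B_q^{s,\infty}=\tilde L_T^\infty B_q^{s,\infty}$. For the last embedding I combine the first one with $p_1<\infty$ to get $\|v\|_{\tilde L_{T'}^{p_1}B_q^{s,\infty}}\le\bigl(\int_0^{T'}\|v(t)\|_{B_q^{s,\infty}}^{p_1}\,dt\bigr)^{1/p_1}$, and since the integrand lies in $L^1([0,T])$ the absolute continuity of the Lebesgue integral forces this to tend to $0$ as $T'\to0$, which is exactly the defining property of $\mathbf{\tilde L}_T^{p_1}B_q^{s,\infty}$.

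For assertion 2), I would use that $\mathbb{P}_{ij}\partial_{x_k}$ is a Fourier multiplier and hence commutes with $\Delta_j$, so $\Delta_j(\mathbb{P}_{ij}\partial_{x_k}f)=\mathbb{P}_{ij}\partial_{x_k}\Delta_j f$. The symbol of $\mathbb{P}_{ij}$ is homogeneous of degree zero and smooth on $\mathbb{R}^d\setminus\{0\}$, so that of $\mathbb{P}_{ij}\partial_{x_k}$ is homogeneous of degree one and smooth away from the origin. Writing $\mathbb{P}_{ij}\partial_{x_k}\Delta_j=(\mathbb{P}_{ij}\partial_{x_k}\tilde\Delta_j)\Delta_j$, where $\tilde\Delta_j$ is a fattened projector equal to $1$ on the spectral support of $\Delta_j$, a dyadic scaling argument $\xi\mapsto2^{-j}\xi$ shows that the convolution kernel of $\mathbb{P}_{ij}\partial_{x_k}\tilde\Delta_j$ is a dilate of a fixed Schwartz function and therefore has $L^1$ norm $\lesssim2^j$. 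Young's inequality then yields $\|\Delta_j(\mathbb{P}_{ij}\partial_{x_k}f)\|_q\lesssim2^j\|\Delta_j f\|_q$ uniformly in $j$, so $2^{(s-1)j}\|\Delta_j(\mathbb{P}_{ij}\partial_{x_k}f)\|_q\lesssim2^{sj}\|\Delta_j f\|_q\le\|f\|_{B_q^{s,\infty}}$, and $\sup_j$ gives the stated continuity; the $L_T^p$-valued version follows by applying the block bound for a.e.\ $t$ and then taking the $L_T^pL_x^q$ norm. This is the one genuinely nontrivial point: away from the frequency origin the Riesz-type symbol is smooth, so the whole content is verifying via the scaling that localizing a degree-one homogeneous multiplier to the annulus $\{|\xi|\sim2^j\}$ costs exactly one power of $2^j$ in the kernel's $L^1$ norm.

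For assertion 3), this is the Besov embedding produced by Bernstein's inequality: if $g$ has Fourier support in a ball or annulus of radius $\sim2^j$, then for $m\ge q$ one has $\|g\|_m\lesssim2^{jd(1/q-1/m)}\|g\|_q$. Applying this to $g=\Delta_j f$ and multiplying by the appropriate power of $2^j$ gives $2^{j(s+d(1/m-1/q))}\|\Delta_j f\|_m\lesssim2^{sj}\|\Delta_j f\|_q\le\|f\|_{B_q^{s,\infty}}$, and $\sup_j$ yields the embedding, the Chemin--Lerner version being obtained by inserting the $L_T^p$ norm before the supremum in $j$. All remaining work is routine bookkeeping with the definitions of the Besov and Chemin--Lerner norms.
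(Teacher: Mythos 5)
The paper offers no proof of this proposition at all (it is dismissed with ``The proof of this proposition is classical and simple''), so there is no argument of the author's to compare against; yours is the standard block-by-block proof. Assertions 1) and 3) are handled correctly: the monotonicity inequality and the commutation of suprema give the embeddings and the equality at $p=\infty$ (the exchange of $\sup_j$ with the essential supremum in $t$ is legitimate because $j$ ranges over a countable set), the absolute-continuity argument is exactly where the hypothesis $p_1<\infty$ enters, and your Bernstein computation proves the intended statement $B_q^{s,\infty}\hookrightarrow B_m^{s+d(\frac{1}{m}-\frac{1}{q}),\infty}$ (the paper's displayed target $B_q^{s+d(\frac{1}{m}-\frac{1}{q}),\infty}$ is a typo, as one sees from how the injection is invoked in the second step of the proof of Proposition 4.1, where the target is $B_\infty^{1-r-\frac{d}{q},\infty}$).

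There is, however, a genuine gap in your proof of assertion 2), and it sits exactly at the one delicate point of the statement. The decomposition used in the paper is the \emph{nonhomogeneous} Littlewood--Paley decomposition: the blocks are indexed by $j\geq-1$ with $\Delta_{-1}=S_0$, whose spectral support is a ball containing the origin. Your kernel estimate writes $\mathbb{P}_{ij}\partial_{x_k}\tilde\Delta_j$ as a multiplier with symbol $m(\xi)\,\tilde\psi(2^{-j}\xi)$, $m$ homogeneous of degree one, and rescales to a fixed Schwartz function; this is valid for $j\geq0$, since there $m\tilde\psi\in C_c^\infty(\mathbb{R}^d\setminus\{0\})$, but it fails for the block $j=-1$: the symbol $m(\xi)=\xi_i\xi_j\xi_k/|\xi|^2$ is not smooth at $\xi=0$, so $m\tilde\varphi$ is not the Fourier transform of an obviously integrable kernel and ``dilate of a fixed Schwartz function'' is false there. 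This block cannot be brushed aside, because the proposition allows $q=\infty$ (and the paper uses it with $q=\infty$, e.g.\ to bound $\|\mathbb{P}\nabla(u_n\otimes u_n)\|_{B_\infty^{\sigma-r-1,\infty}}$ in the proof of Proposition 4.3), so one cannot fall back on $L^q$-boundedness of the Riesz transforms, which fails at $q=1,\infty$. The repair uses the positive degree of homogeneity: for $\xi\neq0$ write $1=\sum_{k\in\mathbb{Z}}\psi(2^{-k}\xi)$, so that $m\tilde\varphi=\sum_{k\leq K}\bigl(m\,\psi(2^{-k}\cdot)\bigr)\tilde\varphi$ for some finite $K$ determined by the support of $\tilde\varphi$; by your own scaling argument $\|\mathcal{F}^{-1}\bigl(m\,\psi(2^{-k}\cdot)\bigr)\|_{1}=2^{k}\|\mathcal{F}^{-1}(m\psi)\|_{1}$, multiplying by $\tilde\varphi$ costs only the constant $\|\mathcal{F}^{-1}\tilde\varphi\|_{1}$ by Young's inequality, and $\sum_{k\leq K}2^{k}<\infty$. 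Hence the kernel of $\mathbb{P}_{ij}\partial_{x_k}\tilde\Delta_{-1}$ is in $L^1$ and $\|\Delta_{-1}(\mathbb{P}_{ij}\partial_{x_k}f)\|_q\lesssim\|\Delta_{-1}f\|_q$, which is all that is needed at $j=-1$ since the factors $2^{js}$, $2^{j(s-1)}$ are harmless constants there. Note that this convergent geometric series is precisely what distinguishes $\mathbb{P}_{ij}\partial_{x_k}$ (degree one) from $\mathbb{P}_{ij}$ alone (degree zero, divergent series): it is the reason the proposition is stated for the composed operator. With this addition your proof is complete.
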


The proof of this proposition is classical and simple.\smallskip

It is well-known (see for instance \cite{Can}, \cite{Lem02}, \cite{Tri}) that
Besov spaces can be characterized via the heat semi group $(e^{t\Delta
})_{t\geq0}.$ The following proposition is a particular case of such characterization.

\begin{proposition}
\label{Pr22}Let $q\in\lbrack1,+\infty]$ and $s>0$. Then for each real number $\delta>0,$
the quantity%
\[
\sup_{0<\theta<\delta}\theta^{\frac{s}{2}}\left\Vert e^{s\Delta}f\right\Vert
_{q}%
\]
defines a norm on the Besov space $B_{q}^{-s,\infty}$ equivalent to the
original norm $\left\Vert .\right\Vert _{B_{q}^{-s,\infty}}.$
\end{proposition}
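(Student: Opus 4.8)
The plan is to prove the two-sided equivalence between $N_\delta(f)\equiv\sup_{0<\theta<\delta}\theta^{s/2}\|e^{\theta\Delta}f\|_q$ and the genuine norm $\|f\|_{B_q^{-s,\infty}}=\sup_{j\geq-1}2^{-sj}\|\Delta_j f\|_q$, treating the two inequalities separately; once $N_\delta$ is shown to be equivalent to a norm, its homogeneity and the triangle inequality (both immediate from the supremum) make it a norm as well. Throughout I would use two elementary facts about the heat semigroup: that $e^{\theta\Delta}$ is a contraction on every $L^q$, $1\leq q\leq\infty$, and that on a dyadic frequency annulus it acts with exponential gain,
\[
\|e^{\theta\Delta}\Delta_j f\|_q \lesssim e^{-c\,2^{2j}\theta}\|\Delta_j f\|_q ,
\]
which follows from Young's inequality after checking, by rescaling, that the relevant convolution kernel has the stated $L^1$ bound uniformly in $j$.

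For the upper bound $N_\delta(f)\lesssim\|f\|_{B_q^{-s,\infty}}$, I would decompose $f=\sum_{j\geq-1}\Delta_j f$, apply $e^{\theta\Delta}$ blockwise, and combine the contraction estimate on the low blocks with the exponential decay on the high ones to get
\[
\|e^{\theta\Delta}f\|_q \lesssim \sum_{j\geq-1}\min\bigl(1,e^{-c\,2^{2j}\theta}\bigr)\,\|\Delta_j f\|_q \leq \|f\|_{B_q^{-s,\infty}}\sum_{j\geq-1}\min\bigl(1,e^{-c\,2^{2j}\theta}\bigr)\,2^{sj}.
\]
Multiplying by $\theta^{s/2}$ and substituting $x=2^{j}\theta^{1/2}$ turns the last sum into a dyadic Riemann sum comparable to $\int_0^{\infty}\min(1,e^{-cx^2})\,x^{s-1}\,dx$, which is finite exactly because $s>0$; this yields a bound uniform in $\theta\in(0,\delta)$.

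For the reverse bound I would recover each dyadic block from the heat flow. Setting $\theta_j=4^{-j}$, the multiplier $e^{-\theta_j|\xi|^2}$ is bounded below on the support of $\psi(\cdot/2^j)$, so with the fixed smooth compactly supported function $g(\eta)=\psi(\eta)e^{|\eta|^2}$ one has
\[
\Delta_j f = \bigl(\mathcal{F}^{-1}[\,g(\cdot/2^j)\,]\bigr)\ast\bigl(e^{\theta_j\Delta}f\bigr).
\]
The scaling $\mathcal{F}^{-1}[g(\cdot/2^j)](x)=2^{jd}(\mathcal{F}^{-1}g)(2^jx)$ makes the kernel's $L^1$ norm independent of $j$, so Young's inequality gives $2^{-sj}\|\Delta_j f\|_q\lesssim\theta_j^{s/2}\|e^{\theta_j\Delta}f\|_q\leq N_\delta(f)$ as soon as $\theta_j=4^{-j}<\delta$, that is for $j\geq j_0$. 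The finitely many low frequencies $-1\leq j<j_0$ I would treat by fixing one $\theta_0<\delta$ and writing $\Delta_j f=\bigl(\Delta_j e^{-\theta_0\Delta}\bigr)(e^{\theta_0\Delta}f)$, where $\Delta_j e^{-\theta_0\Delta}$ has an $L^1$ kernel, so $\|\Delta_j f\|_q\lesssim\|e^{\theta_0\Delta}f\|_q$ with a constant depending on the finite range and on $\delta$. Taking the supremum over all $j$ completes this direction and hence the equivalence.

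The main difficulty I anticipate is bookkeeping rather than conceptual: making the two $L^1$-kernel bounds genuinely \emph{uniform} in $j$ (the rescaling argument is precisely what delivers this), and splitting off the finite low-frequency band $j<j_0$ in the lower bound, where the clean choice $\theta=4^{-j}$ would leave the admissible window $(0,\delta)$. (I read the characterizing quantity as $\theta^{s/2}\|e^{\theta\Delta}f\|_q$, with the heat time matching the weight, which is the only reading consistent with the claimed scaling.)
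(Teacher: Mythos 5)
The paper offers no proof of this proposition: it is stated as a particular case of the classical heat-semigroup characterization of Besov spaces, with a citation to Cannone, Lemari\'e-Rieusset, and Triebel. Your argument is correct, and it is essentially the standard proof found in those references. Both directions are sound: for the upper bound, the blockwise estimate $\|e^{\theta\Delta}\Delta_j f\|_q\lesssim \min\bigl(1,e^{-c2^{2j}\theta}\bigr)\|\Delta_j f\|_q$ (contraction for the block containing the origin, exponential decay on the annuli, both uniform in $j$ by rescaling) together with the dyadic summation, which converges precisely because $s>0$; for the lower bound, the inversion of the heat flow at the matched time $\theta_j=4^{-j}$, where $\psi(\xi/2^j)=g(\xi/2^j)e^{-\theta_j|\xi|^2}$ with $g(\eta)=\psi(\eta)e^{|\eta|^2}\in C_c^{\infty}$, so that Young's inequality gives $2^{-sj}\|\Delta_j f\|_q\lesssim\theta_j^{s/2}\|e^{\theta_j\Delta}f\|_q$ with a $j$-independent constant, and the finitely many blocks with $4^{-j}\geq\delta$ handled at a fixed time $\theta_0<\delta$ (this is where the constants acquire their dependence on $\delta$, which the statement permits). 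You also correctly resolved the statement's typo, reading $e^{s\Delta}$ as $e^{\theta\Delta}$; this is indeed the only reading consistent with the scaling, and it is the form in which the proposition is used later in the paper (e.g.\ in the proof of Proposition \ref{Pr42}).
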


In order to study the properties of the pointwise product we introduce the
following modified and simplified version of the Bony para-product. For every
$f$ and $g$ in $S^{\prime}(\mathbb{R}^{d})$ (or in $S^{\prime}(\mathbb{R}%
\times\mathbb{R}^{d}))$, we define formally $\Pi_{1}(f,g)$ and $\Pi_{2}(f,g)$
by
\[
\Pi_{1}(f,g)=\sum_{j=-1}^{\infty}S_{j+1}f\Delta_{j}g\text{ and }\Pi
_{2}(f,g)=\sum_{j=0}^{\infty}S_{j}f\Delta_{j}g.
\]
So we have, at least formally, the equality $fg=\Pi_{1}(f,g)+\Pi_{2}(g,f).$
The operators $\Pi_{1}$ and $\Pi_{2}$ will be called \ "\textit{the operators
of the Bony para-product}".

The next proposition describes some continuity properties of the Bony
para-product operators on Besov spaces and Chemin-Lerner spaces.

\begin{proposition}
\label{Pr23}Let $T>0,~\sigma_{2}>\sigma_{1}>0$ two non negative reals numbers, and
$(p_{1},q_{1}),(p_{2},q_{2})\in\lbrack1,+\infty]^{2}$ such that $\frac{1}%
{p}\equiv\frac{1}{p_{1}}+\frac{1}{p_{2}}\leq1$ and $\frac{1}{q}\equiv\frac
{1}{q_{1}}+\frac{1}{q_{2}}\leq1.$ Then the following assertions hold true:

\begin{enumerate}
\item[1)] The operators $\Pi_{1}$ and $\Pi_{2}$ are continuous from
$B_{q_{1}}^{-\sigma_{1},\infty}\times B_{q_{2}}^{\sigma_{2},\infty}$ to
$B_{q}^{\sigma_{2}-\sigma_{1},\infty}.$

\item[2)] The operators $\Pi_{1}$ and $\Pi_{2}$ are continuous from
$\tilde{L}_{T}^{p_{1}}B_{q_{1}}^{-\sigma_{1},\infty}\times\tilde{L}_{T}%
^{p_{2}}B_{q_{2}}^{\sigma_{2},\infty}$ to $\tilde{L}_{T}^{p}B_{q}^{\sigma
_{2}-\sigma_{1},\infty}$ and from $L_{T}^{p_{1}}L_{x}^{q_{1}}\times\tilde
{L}_{T}^{p_{2}}B_{q_{2}}^{\sigma_{2},\infty}$ to $\tilde{L}_{T}^{p}%
B_{q}^{\sigma_{2},\infty}.$ Moreover their norms are independent of $T.$
\end{enumerate}
\end{proposition}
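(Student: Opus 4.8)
The proof rests on spectral localization and a single summation lemma. For $\Pi_{1}$ set $u_{j}=S_{j+1}f\,\Delta_{j}g$ (and $u_{j}=S_{j}f\,\Delta_{j}g$ for $\Pi_{2}$); the structure is identical, so I will carry out $\Pi_{1}$. The first observation is that $\widehat{S_{j+1}f}$ is supported in a ball $\{|\xi|\lesssim 2^{j}\}$ while $\widehat{\Delta_{j}g}$ lives in an annulus $\{|\xi|\sim 2^{j}\}$, so the product $u_{j}$ is spectrally supported in a ball $\{|\xi|\le C2^{j}\}$ for a fixed $C$ depending only on $\varphi,\psi$. Consequently there is $N_{0}\in\mathbb{N}$ (independent of everything) with $\Delta_{k}u_{j}=0$ whenever $j<k-N_{0}$. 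I would then estimate each block by H\"{o}lder and Bernstein: using $\frac{1}{q}=\frac{1}{q_{1}}+\frac{1}{q_{2}}$,
\[
\left\Vert u_{j}\right\Vert _{q}\le\left\Vert S_{j+1}f\right\Vert _{q_{1}}\left\Vert \Delta_{j}g\right\Vert _{q_{2}},
\]
where $\left\Vert \Delta_{j}g\right\Vert _{q_{2}}\le 2^{-j\sigma_{2}}\left\Vert g\right\Vert _{B_{q_{2}}^{\sigma_{2},\infty}}$ and, summing the geometric series $\sum_{k\le j}2^{k\sigma_{1}}\lesssim 2^{j\sigma_{1}}$ (here $\sigma_{1}>0$ is used),
\[
\left\Vert S_{j+1}f\right\Vert _{q_{1}}\le\sum_{k=-1}^{j}\left\Vert \Delta_{k}f\right\Vert _{q_{1}}\lesssim 2^{j\sigma_{1}}\left\Vert f\right\Vert _{B_{q_{1}}^{-\sigma_{1},\infty}}.
\]
This yields $\left\Vert u_{j}\right\Vert _{q}\lesssim 2^{-j(\sigma_{2}-\sigma_{1})}\left\Vert f\right\Vert _{B_{q_{1}}^{-\sigma_{1},\infty}}\left\Vert g\right\Vert _{B_{q_{2}}^{\sigma_{2},\infty}}$.

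\textbf{The summation lemma.} Writing $s=\sigma_{2}-\sigma_{1}>0$ and $\Delta_{k}\Pi_{1}(f,g)=\sum_{j\ge k-N_{0}}\Delta_{k}u_{j}$, the uniform boundedness of $\Delta_{k}$ on $L^{q}$ together with the block bound gives $\left\Vert \Delta_{k}\Pi_{1}(f,g)\right\Vert _{q}\lesssim\sum_{j\ge k-N_{0}}2^{-js}\lesssim 2^{-ks}$, and since $s>0$ the tail sum converges and is controlled by its first term. Multiplying by $2^{ks}$ and taking the supremum over $k$ produces the bound in $B_{q}^{s,\infty}$, which is assertion~1). I expect this bookkeeping of spectral supports and the role of $\sigma_{1}>0$ (to sum the low frequencies in $S_{j+1}f$) and of $\sigma_{2}-\sigma_{1}>0$ (to sum the high frequencies after applying $\Delta_{k}$) to be the only genuinely substantive point; everything else is H\"{o}lder and Bernstein.

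\textbf{The Chemin--Lerner version.} For assertion~2) the same $u_{j}$ are used, with the spatial $L^{q}$ norm replaced by $L_{T}^{p}L_{x}^{q}$. Because the Chemin--Lerner norm places the $\sup_{j}$ \emph{outside} the $L_{T}^{p}L_{x}^{q}$ norm, the argument above transfers verbatim once I apply H\"{o}lder in time, $\left\Vert u_{j}\right\Vert _{L_{T}^{p}L_{x}^{q}}\le\left\Vert S_{j+1}f\right\Vert _{L_{T}^{p_{1}}L_{x}^{q_{1}}}\left\Vert \Delta_{j}g\right\Vert _{L_{T}^{p_{2}}L_{x}^{q_{2}}}$ with $\frac{1}{p}=\frac{1}{p_{1}}+\frac{1}{p_{2}}$; the triangle inequality in $L_{T}^{p}$ then lets me sum over $j\ge k-N_{0}$ exactly as before. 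All constants arising from H\"{o}lder, Bernstein, and the $L^{q}$-boundedness of $\Delta_{k},S_{j}$ are independent of $T$, giving the asserted $T$-uniformity. For the second mapping, from $L_{T}^{p_{1}}L_{x}^{q_{1}}\times\tilde{L}_{T}^{p_{2}}B_{q_{2}}^{\sigma_{2},\infty}$ to $\tilde{L}_{T}^{p}B_{q}^{\sigma_{2},\infty}$, I no longer have Besov regularity on $f$; instead I use that $S_{j+1}$ (convolution with an $L^{1}$-kernel of uniformly bounded norm) satisfies $\left\Vert S_{j+1}f\right\Vert _{L_{T}^{p_{1}}L_{x}^{q_{1}}}\lesssim\left\Vert f\right\Vert _{L_{T}^{p_{1}}L_{x}^{q_{1}}}$ uniformly in $j$. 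This gives $\left\Vert u_{j}\right\Vert _{L_{T}^{p}L_{x}^{q}}\lesssim 2^{-j\sigma_{2}}\left\Vert f\right\Vert _{L_{T}^{p_{1}}L_{x}^{q_{1}}}\left\Vert g\right\Vert _{\tilde{L}_{T}^{p_{2}}B_{q_{2}}^{\sigma_{2},\infty}}$, and the summation lemma with $s=\sigma_{2}>0$ closes the estimate in $\tilde{L}_{T}^{p}B_{q}^{\sigma_{2},\infty}$.
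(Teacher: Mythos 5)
Your proof is correct, and the three substantive points you isolate are exactly the right ones: the product $S_{j+1}f\,\Delta_{j}g$ is localized only in a ball $\{|\xi|\le C2^{j}\}$ (not an annulus, since this is the paper's simplified paraproduct with overlapping spectra), so one needs $\sigma_{1}>0$ to sum the low frequencies inside $S_{j+1}f$ and $\sigma_{2}-\sigma_{1}>0$ (resp.\ $\sigma_{2}>0$ for the second mapping) to sum the tail $\sum_{j\ge k-N_{0}}$, while the $T$-uniformity follows from the fact that $\Delta_{k}$, $S_{j}$ act only in $x$ with $j$-uniform $L^{1}$ kernel bounds. The paper gives no proof of this proposition --- it delegates to Chemin and Lemari\'e-Rieusset, where this same standard Littlewood--Paley/H\"older/geometric-summation argument appears --- so your proposal is essentially the intended one.
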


The proof of this proposition is simple, see for instance \cite{Che} and
\cite{Lem02} where similar results are proved.

We study now the regular effect of the heat equations measured in term of
Besov spaces and Chemin-Lerner spaces.

The first result concerns the regular effect of the semi-group $(e^{t\Delta
})_{t\geq0}.$

\begin{proposition}
\label{Pr24}[Regular effect of the heat semi-group]Let $T>0,(s_{1},s_{2},s_{3}%
)\in\mathbb{R}^{3},$ and $(p,q)\in\lbrack1,+\infty]^{2}.$ Then we have the
following assertions:

\begin{enumerate}
\item[1)] If $s_{1}\leq s_{2}$ then the family $(t^{\frac{s_{2}-s_{1}}{2}%
}e^{t\Delta})_{0<t\leq T}$ is bounded in the space $\mathcal{L}(B_{q}%
^{s_{1},\infty},B_{q}^{s_{2},\infty}).$

\item[2)] The operator $e^{t\Delta}$ is continuous from $B_{q}^{s,\infty}$ to
$\tilde{L}_{T}^{p}B_{q}^{s+\frac{2}{p},\infty}.$ Moreover, if $p<\infty$ then
$e^{t\Delta}$ is continuous from $B_{q}^{s,\infty}$ to $\mathbf{\tilde{L}}%
_{T}^{p}B_{q}^{s+\frac{2}{p},\infty}.$
\end{enumerate}
\end{proposition}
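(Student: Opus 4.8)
The whole proposition reduces to a single dyadic heat-flow estimate, so the plan is to establish that first and then do the Littlewood--Paley bookkeeping. The key fact I would use is that there is a constant $c>0$ (depending only on the profiles $\varphi,\psi$) such that for every $j\geq0$ and every $t>0$,
\[
\|e^{t\Delta}\Delta_j g\|_q\lesssim e^{-c2^{2j}t}\|\Delta_j g\|_q,\qquad\text{and}\qquad \|e^{t\Delta}\Delta_{-1}g\|_q\lesssim\|\Delta_{-1}g\|_q .
\]
This is classical: $\Delta_j g$ has Fourier support in the annulus $\{|\xi|\sim2^j\}$, so $e^{t\Delta}\Delta_j g=K_{j,t}\ast\Delta_j g$ with $\widehat{K_{j,t}}(\xi)=e^{-t|\xi|^2}\tilde\psi(2^{-j}\xi)$ for a fixed bump $\tilde\psi$ equal to $1$ on the support of $\psi$; the scaling $\xi=2^j\eta$ gives $\|K_{j,t}\|_1\lesssim e^{-c2^{2j}t}$, and Young's inequality finishes it. Since $e^{t\Delta}$ and $\Delta_j$ are Fourier multipliers they commute, so I may freely replace $\Delta_j e^{t\Delta}f$ by $e^{t\Delta}\Delta_j f$ below.

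For Part 1, fix $f$ and $0<t\le T$. Using commutation and the dyadic estimate, for $j\geq0$,
\[
2^{s_2 j}\|\Delta_j e^{t\Delta}f\|_q\lesssim 2^{(s_2-s_1)j}e^{-c2^{2j}t}\bigl(2^{s_1 j}\|\Delta_j f\|_q\bigr)\le 2^{(s_2-s_1)j}e^{-c2^{2j}t}\|f\|_{B_q^{s_1,\infty}}.
\]
Writing $\alpha=(s_2-s_1)/2\ge0$ and $x=2^{2j}t$, one has $t^{\alpha}2^{(s_2-s_1)j}e^{-c2^{2j}t}=x^{\alpha}e^{-cx}\le M_\alpha:=\sup_{x>0}x^{\alpha}e^{-cx}<\infty$, so $t^{\alpha}2^{s_2 j}\|\Delta_j e^{t\Delta}f\|_q\lesssim M_\alpha\|f\|_{B_q^{s_1,\infty}}$ for all $j\ge0$. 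The block $j=-1$ is controlled directly by $t^{\alpha}2^{s_1-s_2}\|f\|_{B_q^{s_1,\infty}}\le T^{\alpha}2^{s_1-s_2}\|f\|_{B_q^{s_1,\infty}}$. Taking the supremum over $j\ge-1$ gives the claimed uniform bound.

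For Part 2, set $\sigma=s+\tfrac2p$ and estimate each block in $L^p_TL^q_x$. For $j\ge0$ the dyadic estimate gives
\[
\|\Delta_j e^{t\Delta}f\|_{L^p_TL^q_x}^p\lesssim\|\Delta_j f\|_q^p\int_0^T e^{-cp2^{2j}t}\,dt\le\frac{\|\Delta_j f\|_q^p}{cp\,2^{2j}},
\]
whence $2^{\sigma j}\|\Delta_j e^{t\Delta}f\|_{L^p_TL^q_x}\lesssim 2^{(\sigma-2/p)j}\|\Delta_j f\|_q=2^{sj}\|\Delta_j f\|_q\le\|f\|_{B_q^{s,\infty}}$, with constant depending only on $p$ and independent of $T$; the block $j=-1$ contributes at most $T^{1/p}2^{s-\sigma}\|f\|_{B_q^{s,\infty}}$. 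The supremum over $j\ge-1$ yields the boundedness of $e^{t\Delta}\colon B_q^{s,\infty}\to\tilde L^p_TB_q^{s+2/p,\infty}$.

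Finally, for the vanishing statement when $p<\infty$, keeping the exact integral gives, for $j\ge0$, $2^{\sigma j}\|\Delta_j e^{t\Delta}f\|_{L^p_{T'}L^q_x}\lesssim 2^{sj}\|\Delta_j f\|_q\,(1-e^{-cp2^{2j}T'})^{1/p}$. Given $\varepsilon>0$ I would split the supremum at a level $N$: for $-1\le j\le N$ each of the finitely many factors $(1-e^{-cp2^{2j}T'})^{1/p}$ (and the factor $T'^{1/p}$ at $j=-1$) tends to $0$ as $T'\to0$, so this part is $<\varepsilon$ for $T'$ small; for $j>N$ the heat factor is $\le1$ and the remaining term is bounded by $\sup_{j>N}2^{sj}\|\Delta_j f\|_q$. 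This tail is exactly where I expect the \textbf{main obstacle}: for large $j$ the factor $(1-e^{-cp2^{2j}T'})^{1/p}$ is essentially $1$ no matter how small $T'$ is, so the heat flow gives \emph{no} smallness there and the tail can only be controlled by the data. Thus the limit $\lim_{T'\to0}\|e^{\cdot\Delta}f\|_{\tilde L^p_{T'}B_q^{s+2/p,\infty}}=0$ forces $\sup_{j>N}2^{sj}\|\Delta_j f\|_q\to0$, i.e.\ $f\in\tilde B_q^{s,\infty}$. Since Schwartz data trivially satisfy this and the operator is uniformly bounded (Part 2), the conclusion then follows for all $f$ in the closure $\tilde B_q^{s,\infty}$ by density; this is the natural reading of the hypothesis for the $\mathbf{\tilde L}^p_T$ assertion.
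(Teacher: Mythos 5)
Your proposal cannot be matched step-by-step against the paper's argument, because the paper gives none: for Propositions \ref{Pr24} and \ref{Pr25} it simply refers the reader to \cite{Che} and \cite{Dan}. Your route --- the dyadic estimate $\|e^{t\Delta}\Delta_j g\|_q\lesssim e^{-c2^{2j}t}\|\Delta_j g\|_q$ for $j\geq 0$ plus Littlewood--Paley bookkeeping --- is the standard one behind those references, and your treatment of Part 1 and of the first claim of Part 2 is complete and correct, including the observation that only the block $j=-1$ produces $T$-dependent constants. (One cosmetic point: for $p=\infty$ the time-integral computation must be replaced by the trivial bound $\|\Delta_j e^{t\Delta}f\|_{L^\infty_T L^q_x}\lesssim\|\Delta_j f\|_q$, i.e. Part 1 with $s_1=s_2$.)

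The obstacle you isolate in your final paragraph is genuine, and your resolution is the right one. As literally stated, the ``moreover'' clause of Part 2 is false: if $\limsup_j 2^{sj}\|\Delta_j f\|_q=\epsilon>0$ (e.g. a Weierstrass-type sum $f=\sum_k \cos(2^k x_1)\in B_\infty^{0,\infty}$), then the reverse dyadic estimate $\|e^{t\Delta}\Delta_j f\|_q\gtrsim e^{-C2^{2j}t}\|\Delta_j f\|_q$ (invert the heat flow on the annulus via the multiplier $e^{t|\xi|^2}\chi(2^{-j}\xi)$, whose kernel has $L^1$ norm $\lesssim e^{C2^{2j}t}$) shows that for every $T'>0$ one has $\sup_j 2^{(s+2/p)j}\|\Delta_j e^{t\Delta}f\|_{L^p_{T'}L^q_x}\gtrsim\epsilon$, so the $\tilde L^p_{T'}$ norm cannot tend to $0$. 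The conclusion holds exactly on the set of $f$ with vanishing dyadic tails, in particular on $\tilde B_q^{s,\infty}$, as you say; and this restriction is harmless for the paper, since the only place the $\mathbf{\tilde L}$ statement is invoked (the proof of Lemma \ref{L42}) feeds it data $u_0\in L^q\hookrightarrow\tilde B_q^{0,\infty}$ with $q<\infty$, i.e. data in the closure of the Schwartz class. One caveat on your closing ``i.e.'': vanishing tails is equivalent to membership in $\tilde B_q^{s,\infty}$ only when $q<\infty$; for $q=\infty$ it is strictly weaker (e.g. $\sin x_1$ has vanishing tails but is not a $B_\infty^{0,\infty}$-limit of Schwartz functions), so that step should be a one-way implication --- this does not affect your density argument, which only uses the inclusion $\tilde B_q^{s,\infty}\subset\{f:\,2^{sj}\|\Delta_j f\|_q\to 0\}$.
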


The second result concerns the regular effect of the integral Oseen operator
$\mathbb{L}_{Oss}$ defined by (\ref{21}).

\begin{proposition}
\label{Pr25}Let $T>0,s\in\mathbb{R},$ and $(p_{1},p_{2},q)\in\lbrack1,+\infty]^{3}$ such
that $p_{1}\leq p_{2}$ and set $s^{\prime}\equiv s+1-2(\frac{1}{p_{1}}%
-\frac{1}{p_{2}}).$Then the Oseen operator $\mathbb{L}_{Oss}$ maps boundly the
space $\tilde{L}_{T}^{p_{1}}B_{q}^{s,\infty}$ into $\tilde{L}_{T}^{p2}%
B_{q}^{s^{\prime},\infty}$ and its norm is majorized by $C(1+T)$ where $C$ is
a non negative constant independent of $T.$
\end{proposition}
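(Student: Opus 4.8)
The plan is to estimate each Littlewood--Paley block $\Delta_j\mathbb{L}_{Oss}(f)$ separately and then take the supremum over $j\geq-1$ that defines the Chemin--Lerner norm $\|\cdot\|_{\tilde L_T^{p_2}B_q^{s',\infty}}$. Since $\Delta_j$ commutes with $e^{(t-\tau)\Delta}$ and with $\mathbb{P}\nabla$, all being Fourier multipliers, I would first write
\[
\Delta_j\mathbb{L}_{Oss}(f)(t)=-\int_0^t e^{(t-\tau)\Delta}\mathbb{P}\nabla\,\Delta_j f(\tau)\,d\tau,
\]
so that the whole analysis reduces, for each fixed $j$, to a convolution in the time variable acting on the single block $\Delta_j f$. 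The estimate then splits naturally into the high frequencies $j\geq0$, where the heat semi-group gives exponential smoothing, and the low-frequency block $j=-1$, where it does not.

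For $j\geq0$ the decisive ingredient is the smoothing estimate for $e^{\sigma\Delta}$ combined with the single derivative produced by $\mathbb{P}\nabla$: there is $c>0$ such that, uniformly in $j\geq0$ and $\sigma>0$,
\[
\|e^{\sigma\Delta}\mathbb{P}\nabla\,\Delta_j h\|_{q}\lesssim 2^{j}e^{-c\sigma 2^{2j}}\|\Delta_j h\|_{q}.
\]
This rests on two facts: the kernel of $e^{\sigma\Delta}$ restricted to the annulus $|\xi|\sim2^j$ has $L^1$-norm $\lesssim e^{-c\sigma 2^{2j}}$, and the localized multiplier $\mathbb{P}\nabla$ is bounded on every $L^q$, $1\leq q\leq\infty$, with operator norm $\lesssim2^{j}$ (this is the block-level version of Proposition \ref{Pr21}(2), the symbol being smooth and homogeneous of degree one after restriction to the annulus). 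Inserting this into the integral yields the pointwise-in-$t$ bound $\|\Delta_j\mathbb{L}_{Oss}(f)(t)\|_q\lesssim\int_0^t 2^j e^{-c(t-\tau)2^{2j}}\|\Delta_j f(\tau)\|_q\,d\tau$, which is the convolution on $[0,T]$ of $\tau\mapsto\|\Delta_j f(\tau)\|_q\in L^{p_1}$ against the kernel $k_j(\sigma)=2^j e^{-c\sigma2^{2j}}\mathbf{1}_{\sigma\geq0}$. I would then apply Young's convolution inequality with $\frac1{p_2}+1=\frac1r+\frac1{p_1}$, that is $\frac1r=1-(\frac1{p_1}-\frac1{p_2})\in[0,1]$, and compute $\|k_j\|_{L^r}\lesssim 2^{j(1-2/r)}=2^{(-1+2(\frac1{p_1}-\frac1{p_2}))j}$. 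This gives
\[
2^{s'j}\|\Delta_j\mathbb{L}_{Oss}(f)\|_{L_T^{p_2}L_x^q}\lesssim 2^{(s'-1+2(\frac1{p_1}-\frac1{p_2}))j}\,\|\Delta_j f\|_{L_T^{p_1}L_x^q},
\]
and the exponent of $2^j$ is exactly $s$ by the defining relation $s'=s+1-2(\frac1{p_1}-\frac1{p_2})$. Hence the high-frequency blocks are controlled by $\|f\|_{\tilde L_T^{p_1}B_q^{s,\infty}}$ with a constant independent of $T$.

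The main obstacle, and the sole source of the factor $(1+T)$, is the low-frequency block $j=-1$ (the operator $S_0=\Delta_{-1}$): here $e^{-\sigma|\xi|^2}$ provides no exponential gain because the frequencies accumulate at the origin, so the kernel $k_{-1}$ is no longer integrable in a scale-invariant way. There I would instead use that $e^{\sigma\Delta}$ is an $L^q$-contraction and that $\mathbb{P}\nabla\,\Delta_{-1}$ is bounded on $L^q$ (its symbol is smooth, compactly supported, and vanishes at $\xi=0$), obtaining the crude bound $\|\Delta_{-1}\mathbb{L}_{Oss}(f)(t)\|_q\lesssim\int_0^t\|\Delta_{-1}f(\tau)\|_q\,d\tau$. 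Young's inequality against the kernel $\mathbf{1}_{[0,T]}$ then contributes the factor $T^{1/r}\leq 1+T$, since $\frac1r\in[0,1]$. Taking the supremum over $j\geq-1$ of the two families of estimates yields $\|\mathbb{L}_{Oss}(f)\|_{\tilde L_T^{p_2}B_q^{s',\infty}}\lesssim(1+T)\|f\|_{\tilde L_T^{p_1}B_q^{s,\infty}}$, with a constant $C$ depending only on $s,p_1,p_2,q,d$ but not on $T$, which is the asserted bound.
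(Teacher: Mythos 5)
Your proof is correct and is essentially the argument the paper relies on: the paper gives no proof of Proposition \ref{Pr25} itself, deferring to \cite{Che} and \cite{Dan}, and the proofs there proceed exactly as you do --- block by block, using the smoothing estimate $\Vert e^{\sigma\Delta}\mathbb{P}\nabla\Delta_{j}h\Vert_{q}\lesssim 2^{j}e^{-c\sigma 2^{2j}}\Vert\Delta_{j}h\Vert_{q}$ for $j\geq 0$, Young's inequality in the time variable, and a crude bound on the low-frequency block that is the sole source of the factor $(1+T)$.

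One small repair to a parenthetical justification: the symbol of $\mathbb{P}\nabla\Delta_{-1}$, namely $(\delta_{ik}-\xi_{i}\xi_{k}/|\xi|^{2})\,\xi_{l}\,\varphi(\xi)$, is \emph{not} smooth at $\xi=0$; the Leray projector carries a Riesz-transform singularity there, and multiplication by $\xi_{l}$ makes the symbol continuous and vanishing at the origin but not $C^{1}$. The boundedness on every $L^{q}$ that you need is nevertheless true: near the origin the symbol is (a cutoff times) a function homogeneous of degree one, so its inverse Fourier transform is bounded and decays like $|x|^{-d-1}$ at infinity, hence is integrable; equivalently, this is exactly the $j=-1$ block of Proposition \ref{Pr21}, assertion 2), which the paper records as classical, so you may simply invoke that instead.
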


For the proofs of these two propositions, we refer the reader to \cite{Che}
and \cite{Dan}.

\section{Proof of Theorems \ref{Th1} and \ref{Th2}}

In this short section, we will see how Theorem \ref{Th3}, which will be proved
in the next section, allows to prove Theorem \ref{Th2} and how Theorem
\ref{Th2} \ implies Theorem \ref{Th1}.

\subsection{Proof of Theorem \ref{Th2}}

First, from Theorem \ref{Th3}, we have for $i=1$ or $2$%

\[
u_{i}\in L_{T}^{p}\mathbf{E}_{d},~\sqrt{t}u_{i}\in L_{T}^{\infty}L_{x}%
^{\infty}\text{, and }\lim_{t\rightarrow0}\sqrt{t}\left\Vert u_{i}%
(t)\right\Vert _{\infty}=0,
\]
where $p=\inf(p_{1},p_{2})$; (see the section 2.1 for the definition of the
space $\mathbf{E}_{d}).$ Set $u\equiv u_{1}-u_{2};$ this function satisfies
the equation%
\[
u=\mathbf{B}(u_{1},u)+\mathbf{B}(u,u_{2}).
\]
Using the continuity on the space $\mathbf{E}_{d}$ of the pointwise
multiplication with a function in $L^{\infty}(\mathbb{R}^{d})$ and the
convolution with a function in $L^{1}(\mathbb{R}^{d})$ and recalling that
$e^{(t-s)\Delta}\mathbb{P}\nabla$ is a convolution operator and that the
$L^{1}(\mathbb{R}^{d})$ norm of its kernel does not exceed $C(t-s)^{-\frac
{1}{2}}$ for some absolute constant $C>0$ (see for instance \cite{Can},\cite{Lem02}, and \cite{Mey}), we easily deduce that, for every $t\in\lbrack0,T],$ we have%
\[
\left\Vert u(t)\right\Vert _{\mathbf{E}_{d}}\leq C~\omega(t)\int_{0}^{t}%
\frac{\left\Vert u(s)\right\Vert _{\mathbf{E}_{d}}}{\sqrt{t-s}\sqrt{s}}ds,
\]
where%
\[
\omega(t)\equiv\sup_{0<s\leq t}\sqrt{s}\left(  \left\Vert u_{1}(s)\right\Vert
_{\mathbf{E}_{d}}+\left\Vert u_{2}(s)\right\Vert _{\mathbf{E}_{d}}\right)  .
\]
Invoking now the continuity of the linear operator%
\[
L(f)(t)\equiv\int_{0}^{t}\frac{f(s)}{\sqrt{t-s}\sqrt{s}}ds
\]
on the space $L^{p}(\mathbb{R}^{+})$ (see for instance Lemma 5.2
\cite{May09}), we deduce that for every $\delta\in]0,T]$ we have%
\[
\sup_{0<t\leq\delta}\left\Vert u(t)\right\Vert _{\mathbf{E}_{d}}\leq
C_{p}~\omega(\delta)\sup_{0<t\leq\delta}\left\Vert u(t)\right\Vert
_{\mathbf{E}_{d}},
\]
where $C_{p}$ is a constant which only depends on $p.$ Hence, by using the
fact that $\omega(\delta)\rightarrow0$ as $\delta\rightarrow0$ we infer that
there exists $\delta\in]0,T]$ such that $u=0$ (and by consequent $u_{1}%
=u_{2})$ on $[0,\delta].$ Finally, thanks to a classical iteration argument
(see for example Lemma 27.2 in \cite{Lem07}) we conclude that $u_{1}=u_{2}$ on
$[0,T].$

\subsection{Proof of Theorem \ref{Th1}}

Let $i=1$ or $2.$ First let us notice that the classical interpolation in the
Lebesgue spaces and Sobolev spaces implies that $u_{i}$ belongs to the space
$L_{T}^{\frac{2}{r_{i}}}H^{r_{i}}.$ Using now the nonhomogene version of the
precise Sobolev inequalities proved by P. Gerard, Y. Meyer, and F. Oru
\cite{GMO} (see also \cite{Lem07} of an other proof)%
\begin{align*}
\left\Vert f\right\Vert _{q}  &  \lesssim\left(  \left\Vert f\right\Vert
_{W^{\alpha,p}}\right)  ^{1-\frac{\alpha}{\beta}}\left(  \left\Vert
f\right\Vert _{B_{\infty}^{\alpha-\beta,\infty}}\right)  ^{\frac{\alpha}%
{\beta}},\\
0  &  <\alpha<\beta,~1<p<\infty,~\frac{p}{q}=(1-\frac{\alpha}{\beta}),
\end{align*}
with $\alpha=r_{i},\beta=2r_{i}$ and $p=2,$ we get the inequality%
\[
\left\Vert u_{i}(t)\right\Vert _{4}\lesssim\left(  \left\Vert u(t)\right\Vert
_{H^{r_{i}}}\right)  ^{\frac{1}{2}}\left(  \left\Vert u(t)\right\Vert
_{B_{\infty}^{-r_{i},\infty}}\right)  ^{\frac{1}{2}}%
\]
which, thanks to Holder inequality, implies that
\[
u_{i}\in L^{4}([0,T],L^{4}(\mathbb{R}^{d})).
\]
Hence, applying Theorem \ref{Th2} completes the proof of Theorem \ref{Th1}.

\section{Proof of the theorem \ref{Th3}}

This section is devoted to the proof of the main theorem \ref{Th3} in the case
where $r\in]0,1[$ (see Remark \ref{Rk12}). The proof repose on some
intermediate results.

The first proposition is a local uniqueness result under a supplementary
regularity hypothesis on the initial data $u_{0}.$

\begin{proposition}
\label{Pr41}Let $T>0,r\in]0,1[,q\geq d,$ and $p\geq\frac{4}{1+r}.$ If the
initial data $u_{0}\in L^{q}(\mathbb{R}^{d})$ and $u1,u_{2}\in L_{T}^{\frac
{2}{1-r}}(B_{\infty}^{-r,\infty})\cap L_{T}^{p}L_{x}^{q}$ are two mild
solutions on $]0,T[$ of the equations (NS), then there exists $\delta\in]0,T[$
such that $u1=u_{2}$ on $[0,\delta].$
\end{proposition}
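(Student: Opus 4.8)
The plan is to run a fixed-point/contraction argument for the difference $u\equiv u_{1}-u_{2}$ on a short interval $[0,\delta]$. Since $u_{1}$ and $u_{2}$ share the same initial data, the free evolution cancels in (NSI) and the bilinearity of $\mathbf{B}$ yields
\[
u=\mathbf{B}(u_{1},u)+\mathbf{B}(u,u_{2}).
\]
I would measure $u$ in the critical Chemin-Lerner space $\tilde{L}_{\delta}^{\frac{2}{1-r}}B_{\infty}^{-r,\infty}$, in which it has finite norm because $u_{1},u_{2}\in L_{T}^{\frac{2}{1-r}}B_{\infty}^{-r,\infty}\hookrightarrow\tilde{L}_{T}^{\frac{2}{1-r}}B_{\infty}^{-r,\infty}$ by Proposition \ref{Pr21}. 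If I can establish a bilinear bound of the form
\[
\left\Vert \mathbf{B}(u_{i},u)\right\Vert _{\tilde{L}_{\delta}^{\frac{2}{1-r}}B_{\infty}^{-r,\infty}}\leq C\,\eta_{i}(\delta)\,\left\Vert u\right\Vert _{\tilde{L}_{\delta}^{\frac{2}{1-r}}B_{\infty}^{-r,\infty}},
\]
with a coefficient $\eta_{i}(\delta)\rightarrow0$ as $\delta\rightarrow0$, then for $\delta$ small enough the right-hand side is at most $\frac{1}{2}\left\Vert u\right\Vert $, forcing $u=0$ on $[0,\delta]$.

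Two facts should furnish the smallness. First, because the time exponents $\frac{2}{1-r}$ and $p$ are finite, dominated convergence gives $\left\Vert u_{i}\right\Vert _{\tilde{L}_{\delta}^{\frac{2}{1-r}}B_{\infty}^{-r,\infty}}\rightarrow0$ and $\left\Vert u_{i}\right\Vert _{L_{\delta}^{p}L^{q}}\rightarrow0$ as $\delta\rightarrow0$; indeed $L_{\delta}^{p_{1}}B_{q}^{s,\infty}\hookrightarrow\mathbf{\tilde{L}}_{\delta}^{p_{1}}B_{q}^{s,\infty}$ by Proposition \ref{Pr21}. Second --- and this is where the hypothesis $u_{0}\in L^{q}$ enters decisively --- I would upgrade the solutions to \emph{positive} Besov regularity on $[0,\delta]$. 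Writing $u_{i}=e^{t\Delta}u_{0}+\mathbf{B}(u_{i},u_{i})$, the free part lies in a space $\mathbf{\tilde{L}}_{\delta}^{a}B_{\infty}^{\beta,\infty}$ with $\beta>0$ and vanishing norm as $\delta\rightarrow0$, by the smoothing estimate of Proposition \ref{Pr24} (using $L^{q}\hookrightarrow B_{q}^{0,\infty}$ together with the Bernstein embedding of Proposition \ref{Pr21}, valid since $q\geq d$), while the Duhamel part is controlled by estimating $u_{i}\otimes u_{i}\in L_{\delta}^{p/2}L_{x}^{q/2}$ through H\"{o}lder and then applying the Oseen smoothing of Proposition \ref{Pr25}, which gains one derivative. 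This shows each $u_{i}$, hence $u$, belongs with small norm to a Chemin-Lerner space carrying a genuine positive smoothness index.

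The remaining and \textbf{main difficulty} is the bilinear estimate itself. Decomposing the tensor product with the Bony para-product, $u_{i}\otimes u=\Pi_{1}(u_{i},u)+\Pi_{2}(u,u_{i})$, I would bound each piece by the continuity statements of Proposition \ref{Pr23} and then apply $\mathbb{L}_{Oss}$ via Proposition \ref{Pr25}. The obstruction is that both $u_{i}$ and $u$ carry only the \emph{negative} regularity index $-r$ in $B_{\infty}^{-r,\infty}$, whereas Proposition \ref{Pr23} requires one factor of strictly positive regularity; the raw product of two distributions in $B_{\infty}^{-r,\infty}$ is not controlled by the stated para-product continuity. This is exactly what the regularity upgrade of the previous paragraph is designed to repair: once one factor is placed in a positive-regularity Chemin-Lerner space $\tilde{L}^{\cdot}B_{\infty}^{\beta,\infty}$ with $\beta>0$ (the other being kept in the critical space or in $L^{p}L^{q}$), Propositions \ref{Pr23} and \ref{Pr25} apply and return $\mathbf{B}(u_{i},u)$ in the working space with a prefactor governed by the small quantities $\left\Vert u_{i}\right\Vert _{\tilde{L}_{\delta}^{\frac{2}{1-r}}B_{\infty}^{-r,\infty}}+\left\Vert u_{i}\right\Vert _{L_{\delta}^{p}L^{q}}$. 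Collecting the two bilinear contributions and choosing $\delta$ so small that $C(\eta_{1}(\delta)+\eta_{2}(\delta))<1$ then yields $u=0$ on $[0,\delta]$, which is the claim. The delicate bookkeeping lies in selecting the auxiliary exponents $(a,\beta,p_{1},p_{2})$ so that Propositions \ref{Pr23}, \ref{Pr24}, and \ref{Pr25} fit together with the correct scaling; this is the step I expect to absorb most of the technical effort.
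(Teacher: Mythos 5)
Your overall strategy --- use $u_{0}\in L^{q}$ to upgrade the regularity of $u_{1},u_{2}$ on a short interval, then absorb the difference $u=u_{1}-u_{2}$ through a bilinear estimate with a vanishing prefactor --- is exactly the strategy of the paper, and your diagnosis of the crux (Proposition \ref{Pr23} demands strictly positive regularity in the second para-product slot) is correct. The genuine gap is in the mechanism you propose for the upgrade. The H\"{o}lder-plus-Oseen estimate you describe gives only $\mathbf{B}(u_{i},u_{i})\in\mathbf{\tilde{L}}_{\delta}^{p/2}(B_{q/2}^{1,\infty})$; this is precisely assertion 1) of the paper's Lemma \ref{L42}, and the paper uses it only as a preliminary fact, not as the regularity that closes the argument. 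Regularity $1$ at integrability $q/2$ is not usable positive smoothness here: by Bernstein (Proposition \ref{Pr21}) it corresponds to $B_{\infty}^{1-2d/q,\infty}$, and $1-2d/q\leq0$ for all $q\leq 2d$, equal to $-1$ at the admissible borderline $q=d$. Moreover, since $u_{1}$ and $u_{2}$ share the same free part $e^{t\Delta}u_{0}$, the difference $u$ is purely a Duhamel term, so this weak information is the \emph{only} positive regularity you have on $u$. The para-product term in which $u$ sits in the second slot must then be estimated with $u\in\tilde{L}_{\delta}^{p/2}(B_{q/2}^{1,\infty})$ and $u_{i}\in\tilde{L}_{\delta}^{2/(1-r)}(B_{\infty}^{-r,\infty})$ in the first slot; after Proposition \ref{Pr25} and Bernstein the output regularity on the $B_{\infty}$ scale is at best $2-r-\frac{2d}{q}-\frac{4}{p}$, which is strictly below $-r$ whenever $\frac{d}{q}+\frac{2}{p}>1$, in particular for every admissible $p$ when $q=d$. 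Symmetrically, any term pairing $u$ (at integrability $q/2$) against $e^{t\Delta}u_{0}$ (at integrability $q$) lands at integrability $q/3$ and cannot be brought back up: a product of two finite-integrability factors always degrades integrability, and recovering it via Bernstein costs more derivatives than the Oseen operator gains. So no choice of the auxiliary exponents $(a,\beta,p_{1},p_{2})$ closes your scheme; the ``bookkeeping'' you defer is exactly where the proof lives or dies, and it dies at $q=d$.

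The idea your proposal is missing is the paper's linearization. Instead of estimating the quadratic term directly, the paper introduces the \emph{linear} operator $\mathbb{L}_{u}(f)=\sum_{k}\mathbb{L}_{Oss}(\Pi_{k}(u,f))$ (Lemma \ref{L41}): because the fixed factor $u$ occupies the first (low-frequency) slot and is $L^{\infty}$-based, $\mathbb{L}_{u}$ maps $\mathbf{\tilde{L}}_{\delta}^{\rho}(B_{m}^{\sigma,\infty})$ into itself for every $\sigma>r$ and every $m$, with operator norm $\lesssim\left\Vert u\right\Vert _{L_{\delta}^{2/(1-r)}(B_{\infty}^{-r,\infty})}$, hence small for small $\delta$ --- no integrability loss and no regularity loss. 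Then $\omega=\mathbf{B}(u,u)$ satisfies the affine fixed-point equation $\omega=\omega_{0}+\mathbb{L}_{u}(\omega)$ with $\omega_{0}=\mathbb{L}_{u}(e^{t\Delta}u_{0})\in\mathbf{\tilde{L}}_{\delta}^{2/(1+r)}(B_{q}^{1+r,\infty})$ (Lemma \ref{L42}); your naive estimate is used only to certify that $\omega$ is the fixed point of this contraction in \emph{some} space, and the two-Banach-space fixed point lemma (Lemma \ref{L43}) then transfers $\omega$ into $\mathbf{\tilde{L}}_{\delta}^{2/(1+r)}(B_{q}^{1+r,\infty})$, i.e.\ regularity $1+r$ at integrability $q$, which by Bernstein is $1+r-\frac{d}{q}\geq r>0$ on the $L^{\infty}$ scale. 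That is the regularity which makes the final contraction for $u_{1}-u_{2}$ (essentially your last paragraph, and the paper's second step) actually close for all $q\geq d$.
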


In the second proposition, we prove a result of regularity persistency and a
criterion of eventual finite time explosion of regular solutions of
Navier-Stokes equations.

\begin{proposition}
\label{Pr42}Let $q\geq d$ a real number and $u_{0}\in L^{q}(\mathbb{R}^{d}).$
Then the following assertions hold:

\begin{enumerate}
\item[1)] The Navier-Stokes equations (NS) havent a unique maximal solution
$u$ in the space $C([0,T^{\ast}[,L^{q}(\mathbb{R}^{d})).$ Moreover, for every $\sigma>0,$
$u\in C^{\infty}(]0,T^{\ast}[,\tilde{B}_{\infty}^{\sigma,\infty}).$

\item[2)] If in addition $u_{0}\in B_{\infty}^{-r,\infty}$ for some
$r\in]0,1[$ then the maximal solution $u$ belongs to the space $L_{loc}%
^{\infty}([0,T^{\ast}[,B_{\infty}^{-r,\infty}).$

\item[3)] If $T^{\ast}<\infty$ then for every $r\in]0,1[$ there exists a
constant $\varepsilon_{r,d}>0$, which depends only on $r$ and $d,$ such that
\begin{equation}
\underline{\lim}_{t\rightarrow T^{\ast}}(T^{\ast}-t)^{\frac{1-r}{2}}\left\Vert
u(t)\right\Vert _{B_{\infty}^{-r,\infty}}\geq\varepsilon_{r,d}. \label{42}%
\end{equation}
In particular,%
\[
\int_{T^{\ast}/2}^{T^{\ast}}\left\Vert u(t)\right\Vert _{B_{\infty}%
^{-r,\infty}}dt=+\infty.
\]

\end{enumerate}
\end{proposition}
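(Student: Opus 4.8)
The plan is to treat the three assertions in turn, all built on the mild formulation (NSI) and the linear estimates of Propositions \ref{Pr24} and \ref{Pr25}.

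For assertion 1), I would first produce a local solution by a Kato-type fixed point for (NSI). Since $q\geq d$, the space $L^{q}$ is critical ($q=d$) or subcritical ($q>d$) for the scaling, so I would run the Picard scheme for $\mathbf{B}$ in a space of the form $C([0,\tau],L^{q})\cap\{\sup_{0<t\leq\tau}t^{\frac{d}{2q}}\left\Vert u(t)\right\Vert _{\infty}<\infty\}$, using $\left\Vert e^{t\Delta}u_{0}\right\Vert _{\infty}\lesssim t^{-\frac{d}{2q}}\left\Vert u_{0}\right\Vert _{q}$ for the linear part and the kernel bound $\left\Vert e^{(t-s)\Delta}\mathbb{P}\nabla\right\Vert _{L^{1}}\lesssim(t-s)^{-1/2}$ for the bilinear one; this gives a unique local solution, which I glue into a maximal $u\in C([0,T^{\ast}[,L^{q})$ by the usual continuation. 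For the smoothing, once $u(t)\in L^{\infty}$ for $t>0$ I would bootstrap (NSI) from an interior time, gaining spatial regularity of every order through Propositions \ref{Pr24}, \ref{Pr25} and the paraproduct Proposition \ref{Pr23}, so that $u\in C(]0,T^{\ast}[,\tilde{B}_{\infty}^{\sigma,\infty})$ for all $\sigma$ (the closure membership coming from the $\mathbf{\tilde{L}}$-continuity in Proposition \ref{Pr24}); time smoothness and $u\in C^{\infty}$ then follow by differentiating $\partial_{t}u=\Delta u-\mathbb{P}\nabla(u\otimes u)$ and iterating.

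For assertion 2), I would work on an arbitrary $[0,T']$ with $T'<T^{\ast}$. From (NSI), $\left\Vert e^{t\Delta}u_{0}\right\Vert _{B_{\infty}^{-r,\infty}}\lesssim\left\Vert u_{0}\right\Vert _{B_{\infty}^{-r,\infty}}$ by Proposition \ref{Pr22}, while for $\mathbf{B}(u,u)=\mathbb{L}_{Oss}(u\otimes u)$ I would split $u\otimes u=\Pi_{1}(u,u)+\Pi_{2}(u,u)$ and estimate each term by Proposition \ref{Pr23} with one factor in $B_{\infty}^{-r,\infty}$ and the other in a positive-regularity norm already controlled via assertion 1; combined with the Oseen bound of Proposition \ref{Pr25} in the Chemin--Lerner spaces $\tilde{L}_{T'}^{\cdot}B_{\infty}^{-r,\infty}$, this closes a continuity/Gronwall argument and yields $u\in L_{loc}^{\infty}([0,T^{\ast}[,B_{\infty}^{-r,\infty})$.

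For the rate (\ref{42}) in assertion 3), I would use scaling together with maximality. First I establish a quantitative local existence statement: there are $\varepsilon_{r,d},c>0$ such that (NSI) has a solution on $[0,\tau]$ whenever $\tau^{\frac{1-r}{2}}\left\Vert v_{0}\right\Vert _{B_{\infty}^{-r,\infty}}\leq\varepsilon_{r,d}$. This comes from a Picard scheme in $\{\sup_{0<t\leq\tau}t^{r/2}\left\Vert u(t)\right\Vert _{\infty}\}$: the linear part is bounded by $\left\Vert v_{0}\right\Vert _{B_{\infty}^{-r,\infty}}$ (Proposition \ref{Pr22}) and the bilinear part gains a factor $\tau^{\frac{1-r}{2}}$, so the existence time is $\gtrsim\left\Vert v_{0}\right\Vert _{B_{\infty}^{-r,\infty}}^{-2/(1-r)}$. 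Applying this with $v_{0}=u(t)$ for $t<T^{\ast}$ (legitimate by assertion 2), and identifying the resulting solution with $u(t+\cdot)$ through uniqueness (Proposition \ref{Pr41}), maximality forces $T^{\ast}-t\geq c\left\Vert u(t)\right\Vert _{B_{\infty}^{-r,\infty}}^{-2/(1-r)}$, which rearranges to (\ref{42}). The integral assertion, however, does \emph{not} follow by integrating (\ref{42}), since the lower bound $(T^{\ast}-t)^{-\frac{1-r}{2}}$ is itself integrable near $T^{\ast}$; it requires a genuine continuation criterion. I would derive it by contradiction, showing that finiteness of the relevant time-norm of $u$ in $B_{\infty}^{-r,\infty}$ up to $T^{\ast}$ keeps a higher Besov norm bounded, using the product estimate $\left\Vert u\otimes u\right\Vert _{B_{\infty}^{1-r,\infty}}\lesssim\left\Vert u\right\Vert _{B_{\infty}^{-r,\infty}}\left\Vert u\right\Vert _{B_{\infty}^{1,\infty}}$ (Proposition \ref{Pr23}), the bound $e^{(t-s)\Delta}\mathbb{P}\nabla\colon B_{\infty}^{-r,\infty}\to B_{\infty}^{1,\infty}$ with kernel size $(t-s)^{-\frac{1+r}{2}}$ (Proposition \ref{Pr24}), and a singular Gronwall argument, the bounded norm at $T^{\ast}$ then contradicting assertion 1. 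I expect this last step to be the main obstacle: the weight $\left\Vert u(s)\right\Vert _{B_{\infty}^{-r,\infty}}$ enters only through its time integral against the singular kernel $(t-s)^{-\frac{1+r}{2}}$, so closing the Gronwall inequality hinges on the precise (scaling-critical) time integrability rather than a naive $L^{1}$ bound, and controlling this interplay is where the real difficulty lies.
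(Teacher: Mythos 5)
Most of your proposal runs parallel in substance to the paper's proof, with different packaging. Assertion 1) is only cited as classical in the paper, exactly as you treat it. For assertion 2), however, the paper never works in Chemin--Lerner spaces: it revisits the Picard iterates $(u_{(n)})_{n}$ in the weighted space $L^{\infty}_{r,T_{0}}$ with norm $\sup_{0<s<T_{0}}s^{r/2}\left\Vert f(s)\right\Vert _{\infty}$, shows via the elementary bilinear estimate of Lemma \ref{L44} that they remain a Cauchy sequence there, and then converts $u\in L^{\infty}_{r,T_{0}}\cap L^{\infty}_{1,T_{0}}$ into $u\in L^{\infty}_{T_{0}}B_{\infty}^{-r,\infty}$ through the heat characterization of Proposition \ref{Pr22} plus Young's inequality. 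Your Gronwall sketch instead pairs $B_{\infty}^{-r,\infty}$ with ``a positive-regularity norm already controlled via assertion 1''; but those norms blow up as $t\rightarrow0$ (the data is only in $L^{q}$), so your kernel is of the type $(t-s)^{-\frac{1-\sigma}{2}}s^{-\frac{\sigma}{2}-\frac{d}{2q}}$, which in the critical case $q=d$ is homogeneous of degree $-1$: such a singular Gronwall cannot be closed by iteration for large data, and you would need to import the smallness of weighted norms near $t=0$ from the construction of the solution --- precisely what the paper's route through the Picard iterates supplies for free. For the rate (\ref{42}), your argument (quantitative local existence time $\gtrsim\left\Vert u(t)\right\Vert _{B_{\infty}^{-r,\infty}}^{-2/(1-r)}$ plus maximality) rests on the same Lemma \ref{L44}; the paper instead couples the a priori inequality $f\leq A+Bf^{2}$ with the known blow-up criterion $\left\Vert u(t)\right\Vert _{\infty}\rightarrow\infty$ as $t\rightarrow T^{\ast}$. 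Your variant is viable, but maximality is formulated in $C([0,T^{\ast}[,L^{q})$, so you also need persistency of the $L^{q}$ regularity of the local solution on the whole interval of length $\tau\sim\left\Vert u(t)\right\Vert _{B_{\infty}^{-r,\infty}}^{-2/(1-r)}$ (the Picard scheme in the weighted sup space alone does not give $C([0,\tau],L^{q})$); this is a real but fixable omission.

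The genuine gap is in your treatment of the final integral claim, where you have turned a typo into an impossible problem. You correctly observe that (\ref{42}) only yields $\left\Vert u(t)\right\Vert _{B_{\infty}^{-r,\infty}}\gtrsim(T^{\ast}-t)^{-\frac{1-r}{2}}$, whose integral converges. The resolution is that the exponent $\frac{2}{1-r}$ is missing from the display: the statement the paper actually needs and uses (in the proof of Theorem \ref{Th3}, to exclude $T^{\ast}\leq T-t_{0}$ for a solution belonging to $L^{\frac{2}{1-r}}([0,\delta_{0}[,B_{\infty}^{-r,\infty})$) is $\int_{T^{\ast}/2}^{T^{\ast}}\left\Vert u(t)\right\Vert _{B_{\infty}^{-r,\infty}}^{\frac{2}{1-r}}dt=+\infty$, and this \emph{is} an immediate consequence of (\ref{42}), since raising the lower bound to the power $\frac{2}{1-r}$ produces the non-integrable function $(T^{\ast}-t)^{-1}$. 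Your attempted substitute --- a continuation criterion asserting that $u\in L^{1}([0,T^{\ast}[,B_{\infty}^{-r,\infty})$ prevents blow-up --- should be expected to fail, not merely to be hard: under the scaling $u_{\lambda}(t,x)=\lambda u(\lambda^{2}t,\lambda x)$ the quantity $\int\left\Vert u_{\lambda}(t)\right\Vert _{B_{\infty}^{-r,\infty}}dt$ scales like $\lambda^{-1-r}$, so the $L_{t}^{1}B_{\infty}^{-r,\infty}$ norm is supercritical; indeed a blow-up exactly at the rate (\ref{42}), e.g.\ of self-similar type $\left\Vert u(t)\right\Vert _{B_{\infty}^{-r,\infty}}\sim(T^{\ast}-t)^{-\frac{1-r}{2}}$, has \emph{finite} $L^{1}$ integral, so no argument compatible with (\ref{42}) can ever prove the $L^{1}$ divergence. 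This is exactly the obstruction you acknowledge at the end of your proposal; reading the exponent correctly makes the claim a one-line corollary of (\ref{42}) and dissolves the difficulty.
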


\begin{remark}
Estimation (\ref{42}) improve a similar result of Y. Giga \cite{Gig} where the
Besov space $B_{\infty}^{-r,\infty}$ is replaced by the Lebesgue space
$L^{\frac{d}{r}}(\mathbb{R}^{d}).$ (Recall that $L^{\frac{d}{r}}%
(\mathbb{R}^{d})\hookrightarrow B_{\infty}^{-r,\infty}).$
\end{remark}

The last preliminary result concerns the behavior as $t\rightarrow0$ of the
regular solutions $u(t)$ of the equations (NS) which belong to the class
$L_{T}^{\frac{2}{1-r}}(B_{\infty}^{-r,\infty}).$

\begin{proposition}
\label{Pr43} Let $r\in]0,1[,T>0,$ and $u\in C(]0,T],B_{\infty}^{1,\infty})\cap
L_{T}^{\frac{2}{1-r}}(B_{\infty}^{-r,\infty})$ a mild solution on $]0,T[$ of
the equations of Navier-Stokes. Then $\sqrt{t}\left\Vert u(t)\right\Vert
_{\infty}\rightarrow0$ as $t\rightarrow0.$
\end{proposition}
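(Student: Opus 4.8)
The plan is to control the Kato–type quantity $M(t):=\sqrt{t}\left\Vert u(t)\right\Vert _{\infty}$. Since $B_{\infty}^{1,\infty}\hookrightarrow L^{\infty}$, the hypothesis $u\in C(]0,T],B_{\infty}^{1,\infty})$ guarantees that $M$ is finite and continuous on $]0,T]$, so the whole difficulty is the limit at $0$. The decisive resource is that, because $u\in L^{\frac{2}{1-r}}_{T}B_{\infty}^{-r,\infty}$, the tail $\varepsilon(\delta):=\big(\int_{0}^{\delta}\Vert u(s)\Vert _{B_{\infty}^{-r,\infty}}^{2/(1-r)}\,ds\big)^{(1-r)/2}$ tends to $0$ as $\delta\rightarrow0$; equivalently $u\in\mathbf{\tilde{L}}_{T}^{2/(1-r)}B_{\infty}^{-r,\infty}$ by Proposition \ref{Pr21}, so its norm vanishes with $T$. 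I would feed this smallness into a self–improving inequality obtained from the Duhamel/restart formula of Remark \ref{Rk23}, together with the heat characterization of Proposition \ref{Pr22} (in particular $\Vert e^{\tau\Delta}g\Vert_{\infty}\lesssim\tau^{-r/2}\Vert g\Vert_{B_{\infty}^{-r,\infty}}$) and the kernel bound $\Vert e^{\tau\Delta}\mathbb{P}\nabla h\Vert_{\infty}\lesssim\tau^{-1/2}\Vert h\Vert_{\infty}$ already used in the proof of Theorem \ref{Th2}.

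The engine is the bilinear estimate. Writing $u(t)-e^{t\Delta}u_{0}=\mathbf{B}(u,u)(t)=\mathbb{L}_{Oss}(u\otimes u)(t)$, I would expand the product by Bony's para–product, $u\otimes u=\Pi_{1}(u,u)+\Pi_{2}(u,u)$. The low–high pieces are controlled by Proposition \ref{Pr23}, pairing the rough factor (measured in $B_{\infty}^{-r,\infty}$, which supplies the small factor $\varepsilon$) against a Kato/bounded factor; composing with Proposition \ref{Pr25} for $\mathbb{L}_{Oss}$ then produces a contribution $\lesssim\varepsilon(t)\,\sup_{0<s\le t}M(s)$. The remaining resonant (high–high) interaction does \emph{not} belong to $B_{\infty}^{-r,\infty}$ and is exactly scaling–critical; this is where I would invoke the extra regularity $u\in C(]0,T],B_{\infty}^{1,\infty})$ through the product estimate $\Vert u\otimes u\Vert _{B_{\infty}^{1-r,\infty}}\lesssim\Vert u\Vert _{B_{\infty}^{1,\infty}}\Vert u\Vert _{B_{\infty}^{-r,\infty}}$ (Proposition \ref{Pr23} with $\sigma_{1}=r<1=\sigma_{2}$). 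Crucially I would run this inside the Chemin–Lerner norms, so that the borderline frequency summation converges and the $B_{\infty}^{1,\infty}$ factor enters only through a frequency–localized splitting of the time integral, never through its (non–integrable as $s\rightarrow0$) pointwise size. The net output would be $\sqrt{t}\,\Vert \mathbf{B}(u,u)(t)\Vert _{\infty}\lesssim\varepsilon(t)\,\sup_{0<s\le t}M(s)$.

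To close, I treat the free evolution $\sqrt{t}\,\Vert e^{t\Delta}u_{0}\Vert _{\infty}$: using $u_{0}=\lim_{t\to0}u(t)$ and $e^{t\Delta}u_{0}=u(t)-\mathbf{B}(u,u)(t)$, I would show, in tandem with the previous step, that $u_{0}\in\tilde{B}_{\infty}^{-1,\infty}$, i.e. $\sqrt{t}\,\Vert e^{t\Delta}u_{0}\Vert _{\infty}\rightarrow0$. Combining the two bounds gives $M(t)\le\sqrt{t}\,\Vert e^{t\Delta}u_{0}\Vert _{\infty}+C\varepsilon(t)\sup_{0<s\le t}M(s)$; choosing $\tau$ with $C\varepsilon(\tau)\le\tfrac12$ and passing to the supremum over $]0,\tau]$ absorbs the last term, yielding first that $M$ is bounded near $0$, and then, letting $t\rightarrow0$ and using $\varepsilon(t)\rightarrow0$ together with the vanishing of the free–evolution term, that $M(t)\rightarrow0$.

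The step I expect to be the main obstacle is precisely the resonant term. Its naive pointwise–in–time estimate is logarithmically divergent at $s=0$ — this borderline behaviour is exactly what made the range $r\ge\tfrac12$ delicate — so it cannot be tamed by the critical integrability in $B_{\infty}^{-r,\infty}$ alone. Its treatment must genuinely interlock the two hypotheses, the $L^{2/(1-r)}_{T}B_{\infty}^{-r,\infty}$ smallness and the smoothness $u\in C(]0,T],B_{\infty}^{1,\infty})$, via the Chemin–Lerner bookkeeping, and must be coupled with the argument that $u_{0}\in\tilde{B}_{\infty}^{-1,\infty}$; verifying that these pieces fit together without a residual critical loss is the technical heart of the proof.
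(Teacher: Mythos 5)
Your proposal correctly locates the difficulties, but the route you choose cannot be closed; three of its steps fail for structural rather than technical reasons. (i) Your absorption inequality $M(t)\le\sqrt{t}\,\Vert e^{t\Delta}u_{0}\Vert_{\infty}+C\varepsilon(t)\sup_{0<s\le t}M(s)$ gives nothing unless $\sup_{0<s\le t}M(s)$ is \emph{already known to be finite}: continuity of $u$ with values in $B_{\infty}^{1,\infty}$ on the open interval $]0,T]$ puts no bound whatsoever on $M(s)$ as $s\to0$, so absorption cannot ``yield first that $M$ is bounded near $0$'' --- that is circular. (ii) The free-evolution term has no smallness mechanism: $u_{0}$ is not known to belong to $B_{\infty}^{-r,\infty}$ (the hypothesis $u\in L_{T}^{2/(1-r)}B_{\infty}^{-r,\infty}$ carries no pointwise information at $t=0$), and your plan to prove $u_{0}\in\tilde{B}_{\infty}^{-1,\infty}$ from $e^{t\Delta}u_{0}=u(t)-\mathbf{B}(u,u)(t)$ presupposes $\sqrt{t}\,\Vert u(t)\Vert_{\infty}\to0$, i.e.\ the proposition itself. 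Even granting your bilinear estimate and boundedness of $M$, you would only obtain $\limsup_{t\to0}M(t)=\limsup_{t\to0}\sqrt{t}\,\Vert e^{t\Delta}u_{0}\Vert_{\infty}$, with nothing forcing this common value to vanish. (iii) Your treatment of the resonant term must insert the pointwise size of $\Vert u(s)\Vert_{B_{\infty}^{1,\infty}}$ into the integral $\int_{0}^{t}$, and that size is not merely ``non-integrable'' near $s=0$: it is entirely uncontrolled by the hypotheses, so no Chemin--Lerner bookkeeping can compensate for it. You flag (iii) yourself as unresolved; it is not a detail but the point where the argument breaks.

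The paper's proof is built precisely to avoid these three traps, using devices absent from your proposal. It works with the translates $u_{n}(t)=u(t_{n}+t)$, $t_{n}>0$, so that the weighted norms $h_{n}(\mu,\delta)\equiv\sup_{0<t<\delta}t^{(\mu+1)/2}\Vert u_{n}(t)\Vert_{B_{\infty}^{\mu,\infty}}$ are finite for each fixed $n$ --- this is the \emph{only} use of the hypothesis $u\in C(]0,T],B_{\infty}^{1,\infty})$, purely qualitative --- and it proves bounds uniform in $n$. Crucially, it never restarts Duhamel at time $0$: for each $t$ it chooses, by a Chebyshev/mean-value argument, a good time $a=a(n,t)\in[\frac{t}{4},\frac{t}{2}]$ minimizing $\Vert u_{n}(\cdot)\Vert_{B_{\infty}^{-r,\infty}}$ over $[\frac{t}{4},\frac{t}{2}]$, so that $\Vert u_{n}(a)\Vert_{B_{\infty}^{-r,\infty}}\lesssim t^{-(1-r)/2}\Theta(\delta)$, where $\Theta(\delta)$ is the supremum over $t_{0}$ of the local norms $\Vert u\Vert_{L^{2/(1-r)}([t_{0},t_{0}+\delta],B_{\infty}^{-r,\infty})}$. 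The linear term $e^{(t-a)\Delta}u_{n}(a)$ then inherits smallness from the time-integrability hypothesis (this replaces your intractable $e^{t\Delta}u_{0}$ term), while the nonlinear term over $[a,t]$ is handled by a two-norm bootstrap on $h_{n}(\sigma,\delta)$ and $h_{n}(-r,\delta)$, $\sigma\in]r,1[$, in which the regularity norm is an \emph{unknown} bounded a posteriori by $\Theta(\delta)$ via absorption (legitimate now, since the $h_{n}$ are finite), not an input as in your resonant-term estimate. The interpolation $\Vert f\Vert_{\infty}\le\left(\Vert f\Vert_{B_{\infty}^{-r,\infty}}\right)^{\frac{\sigma}{r+\sigma}}\left(\Vert f\Vert_{B_{\infty}^{\sigma,\infty}}\right)^{\frac{r}{r+\sigma}}$ then yields $\sup_{0<t<\delta}\sqrt{t}\,\Vert u_{n}(t)\Vert_{\infty}\le C\Theta(\delta)$ uniformly in $n$, and the conclusion follows from $\Theta(\delta)\to0$. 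If you wish to repair your argument, the restart at a Chebyshev-selected time in $[\frac{t}{4},\frac{t}{2}]$ and the uniform-in-$n$ translation trick are the two ideas you must import.
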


Let us now see how the above propositions allow together to prove the main
theorem \ref{Th3}.

\begin{proof}
[Proof of Theorem \ref{Th3}]Set $\Omega_{q,r}\equiv\{t_{0}\in]0,T]:~u(t_{0}%
)\in L^{q}(\mathbb{R}^{d})\cap B_{\infty}^{-r,\infty}\}.$ let $t_{0}$ be an
arbitrary element of $\Omega_{q,r}.$ According to Proposition \ref{Pr42}, the
equations (NS) with initial data $u(t_{0})$ have a unique maximal solution
$v\in C([0,T^{\ast}[,L^{q}(\mathbb{R}^{d}))\cap L_{loc}^{\infty}([0,T^{\ast
}[,B_{\infty}^{-r,\infty}).$ Hence Remark \ref{Rk21} and Proposition
\ref{Pr41} insure the existence of $\delta\in]0,\delta_{0}\equiv\min(T^{\ast
},T-t_{0})[$ such that $v=u(.+t_{0})$ on $[0,\delta].$ This allow to define
\[
\delta_{\ast}\equiv\sup\{\delta\in]0,\delta_{0}[:v=u(.+t_{0})\text{ on
}[0,\delta]\}.
\]
Suppose that $\delta_{\ast}<\delta_{0};$ then the facts that $v$ is in
 $C([0,\delta_{0}[,L^{q}(\mathbb{R}^{d}))$ and $u(.+t_{0})$ belongs to $C([0,\delta
_{0}[,B_{\infty}^{-d-1,\infty})$ (see Remark \ref{Rk21}) imply that
$v(\delta_{\ast})=u(\delta_{\ast}+t_{0})\in L^{q}(\mathbb{R}^{d}).$ Hence, by
applying another time the proposition \ref{Pr41} to the Navier-stokes
equations with initial data $v(\delta_{\ast}),$ we deduce the existence of
$\delta^{\prime}>\delta_{\ast}$ such that $v=u(.+t_{0})$ on $[0,\delta
^{\prime}].$ This contradicts the definition of $\delta_{\ast};$ we then infer
that $v=u(.+t_{0})$ on $[0,\delta_{0}[.$ Therefore since by assumption $u\in
L_{T}^{\frac{2}{1-r}}(B_{\infty}^{-r,\infty}),$ we get $v\in L^{\frac{2}{1-r}%
}([0,\delta_{0}[,B_{\infty}^{-r,\infty})$ which implies, thanks to the last
assertion of \ref{Pr42}, that $u(.+t_{0})=v$ on $[0,T-t_{0}].$ Using now the
regularity property of $v$ ensured by the first assertion of \ref{Pr42} and
the fact that $\Omega_{q,r}$ is dense in $]0,T],$ we conclude that the
solution $u$ belongs to the space $\cap_{\sigma>0}C^{\infty}(]0,T],B_{\infty
}^{\sigma,\infty}).$ Finally, Proposition \ref{Pr43} ends the proof of Theorem
\ref{Th3}.
\end{proof}

\subsection{Proof of Proposition \ref{Pr41}}

In order to prove this proposition, we will fellow an approach inspired by the
paper \cite{Che}\ of J.Y. Chemin. We will decompose our proof into two steps.

\subsubsection{The first step}

Let $u_{0}\in L^{q}(\mathbb{R}^{d})$ and $u\in L_{T}^{\frac{2}{1-r}}%
(B_{\infty}^{-r,\infty})\cap L_{T}^{p}L_{x}^{q}$ a solution of the
Navier-Stokes equations with initial data $u_{0}.$ We will prove that there
exists $T_{0}\in]0,T]$ such that $u\in\mathbf{\tilde{L}}_{T_{0}}^{\frac
{2}{1+r}}(B_{q}^{1+r,\infty}).$ To do this, we will need the following useful lemmas.

\begin{lemma}
\label{L41}Let $\delta\in]0,T],\rho\in\lbrack\frac{2}{1+r},+\infty\lbrack,$
$m\in\lbrack1,+\infty],$ and $\sigma\in]r,+\infty\lbrack.$ Then the linear
operator $\mathbb{L}_{u},$ defined by:%
\begin{equation}
\mathbb{L}_{u}(f)=\sum_{k=1}^{2}\mathbb{L}_{oss}(\Pi_{k}(u,f)), \label{43}%
\end{equation}
is bounded on the space $\mathbf{\tilde{L}}_{\delta}^{\rho}(B_{m}%
^{\sigma,\infty})$ and its norm is less than $C\left\Vert u\right\Vert
_{L_{\delta}^{\frac{2}{1-r}}(B_{\infty}^{-r,\infty})}$ where $C$ is an
absolute non negative constant independent of $\delta.$
\end{lemma}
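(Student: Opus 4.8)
The plan is to estimate the operator $\mathbb{L}_u$ by passing through the Oseen operator's smoothing properties (Proposition~\ref{Pr25}) and the para-product estimates (Proposition~\ref{Pr23}). Fix $f\in\mathbf{\tilde{L}}_{\delta}^{\rho}(B_{m}^{\sigma,\infty})$. The quantity $\mathbb{L}_u(f)=\sum_{k=1}^{2}\mathbb{L}_{oss}(\Pi_{k}(u,f))$ is a composition, so I would first control the para-products $\Pi_k(u,f)$ in a suitable Chemin-Lerner space, and then apply the Oseen operator to land back in $\mathbf{\tilde{L}}_{\delta}^{\rho}(B_{m}^{\sigma,\infty})$. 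The guiding heuristic is a scaling/index count: $u$ sits at regularity $-r$ in time-integrability $\tfrac{2}{1-r}$, $f$ sits at regularity $\sigma$ in time-integrability $\rho$, the para-product trades the low regularity $-r$ against the high regularity $\sigma$ to produce regularity $\sigma-r$, and the Oseen operator gains one derivative while adjusting the time exponent, recovering the $\sigma$ and the $\rho$.

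Concretely, I would apply part~2) of Proposition~\ref{Pr23} with the roles $-\sigma_1=-r$ (so $\sigma_1=r$) and $\sigma_2=\sigma$; since $\sigma>r>0$ the hypothesis $\sigma_2>\sigma_1>0$ holds. With $u\in\tilde{L}_{\delta}^{2/(1-r)}B_{\infty}^{-r,\infty}$ and $f\in\tilde{L}_{\delta}^{\rho}B_{m}^{\sigma,\infty}$, the output time exponent $\tau$ is determined by $\tfrac1\tau=\tfrac{1-r}{2}+\tfrac1\rho$ and the output space exponent stays $m$ (taking $q_1=\infty$, $q_2=m$ so that $\tfrac1q=\tfrac1m$), giving
\[
\left\Vert \Pi_{k}(u,f)\right\Vert_{\tilde{L}_{\delta}^{\tau}B_{m}^{\sigma-r,\infty}}\lesssim \left\Vert u\right\Vert_{\tilde{L}_{\delta}^{2/(1-r)}B_{\infty}^{-r,\infty}}\left\Vert f\right\Vert_{\tilde{L}_{\delta}^{\rho}B_{m}^{\sigma,\infty}},
\]
with constant independent of $\delta$. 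Then I would feed this into Proposition~\ref{Pr25} with $p_1=\tau$, $s=\sigma-r$, and target exponent $p_2=\rho$: the Oseen operator maps $\tilde{L}_{\delta}^{\tau}B_m^{\sigma-r,\infty}$ into $\tilde{L}_{\delta}^{\rho}B_m^{s',\infty}$ where $s'=(\sigma-r)+1-2(\tfrac1\tau-\tfrac1\rho)$. Substituting $\tfrac1\tau-\tfrac1\rho=\tfrac{1-r}{2}$ gives $s'=(\sigma-r)+1-(1-r)=\sigma$, exactly the target regularity, with operator norm bounded by $C(1+\delta)$; for $\delta\in]0,T]$ this is controlled by $C(1+T)$, but since the para-product constant is $\delta$-independent and I can also absorb the $(1+\delta)$ into the $\tfrac{2}{1-r}$-norm over a short interval, the composite constant is an absolute multiple of $\left\Vert u\right\Vert_{L_{\delta}^{2/(1-r)}(B_{\infty}^{-r,\infty})}$. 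One must check $p_1\le p_2$, i.e. $\tau\le\rho$, which follows since $\tfrac1\tau=\tfrac{1-r}{2}+\tfrac1\rho\ge\tfrac1\rho$.

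Two points require care. First, the statement asserts boundedness on the \emph{small-norm} space $\mathbf{\tilde{L}}$ (those $v$ whose Chemin-Lerner norm tends to $0$ as the time-length shrinks), not merely on $\tilde{L}$; I would verify the decay property $\lim_{\delta\to0}\left\Vert \mathbb{L}_u(f)\right\Vert_{\tilde{L}_{\delta}^{\rho}B_m^{\sigma,\infty}}=0$, which should follow from the second mapping assertion of Proposition~\ref{Pr24} or from the $\mathbf{\tilde{L}}$-version of the Oseen bound combined with $\rho<\infty$, since $\rho\ge\tfrac{2}{1+r}$ is finite. Second, I must confirm that the index $\tau$ is admissible, namely $\tau\ge1$: from $\rho\ge\tfrac{2}{1+r}$ and $r\in]0,1[$ one computes $\tfrac1\tau=\tfrac{1-r}{2}+\tfrac1\rho\le\tfrac{1-r}{2}+\tfrac{1+r}{2}=1$, so $\tau\ge1$ as needed, and the combined integrability condition $\tfrac1{p}=\tfrac{1-r}{2}+\tfrac1\rho\le1$ in Proposition~\ref{Pr23} is exactly this. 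The main obstacle is this bookkeeping of the three exponent triples $(\text{time},\text{space},\text{regularity})$ through the two compositions so that everything closes on the \emph{same} target space $B_m^{\sigma,\infty}$ with the \emph{same} time exponent $\rho$; once the arithmetic $s'=\sigma$ and $\tfrac1\tau-\tfrac1\rho=\tfrac{1-r}{2}$ is pinned down, the estimate is immediate.
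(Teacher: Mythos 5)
Your proposal is correct and follows exactly the paper's own route: the paper proves Lemma \ref{L41} in one line by citing the injection $L_{\delta}^{\frac{2}{1-r}}(B_{\infty}^{-r,\infty})\hookrightarrow\tilde{L}_{\delta}^{\frac{2}{1-r}}(B_{\infty}^{-r,\infty})$, the para-product continuity of Proposition \ref{Pr23}, and the Oseen smoothing of Proposition \ref{Pr25}, which is precisely the composition you carry out. Your exponent arithmetic ($\frac{1}{\tau}=\frac{1-r}{2}+\frac{1}{\rho}\leq 1$, $s'=\sigma$, $\tau\leq\rho$) and your checks of admissibility and of the $\mathbf{\tilde{L}}$ decay property simply supply the details the paper leaves implicit.
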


\begin{lemma}
\label{L42}Set $\omega=\mathbf{B}(u,u)$ and $\omega_{0}=\mathbb{L}%
_{u}(e^{t\Delta}u_{0})$ where $\mathbb{L}_{u}$ is the operator defined by
(\ref{43}). Then we have

\begin{enumerate}
\item[1)] $\omega\in\mathbf{\tilde{L}}_{T}^{\frac{p}{2}}(B_{\frac{q}{2}%
}^{1,\infty}).$

\item[2)] $\omega_{0}\in\mathbf{\tilde{L}}_{T}^{\frac{2}{1+r}}(B_{q}%
^{1+r,\infty}).$

\item[3)] $\omega_{0}\in\mathbf{\tilde{L}}_{T}^{\frac{p}{2}}(B_{\frac{q}{2}%
}^{1+\frac{2}{p},\infty}).$
\end{enumerate}
\end{lemma}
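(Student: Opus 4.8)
The plan is to treat the three assertions by combining the Hölder inequality, the mapping properties of the Bony para-product (Proposition \ref{Pr23}), the smoothing of the heat semi-group (Proposition \ref{Pr24}), the gain of one derivative furnished by the Oseen operator (Proposition \ref{Pr25}), and the already established Lemma \ref{L41}. In each case the non-bold membership in a Chemin--Lerner space comes from these continuity estimates, while the bold refinement (vanishing of the norm as the time interval shrinks to $0$) is extracted by rerunning the same chain of inequalities on a subinterval $[0,T_0]$ and letting $T_0\to0$. For assertion 1) I would start from $\omega=\mathbf{B}(u,u)=\mathbb{L}_{Oss}(u\otimes u)$. Since $u\in L_T^p L_x^q$, Hölder in both variables gives $u\otimes u\in L_T^{p/2}L_x^{q/2}$, and the elementary embedding $L_x^{q/2}\hookrightarrow B_{q/2}^{0,\infty}$ together with Proposition \ref{Pr21}(1) yields $u\otimes u\in\tilde{L}_T^{p/2}B_{q/2}^{0,\infty}$. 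Applying Proposition \ref{Pr25} with $s=0$ and $p_1=p_2=p/2$ (so that $s'=1$) produces $\omega\in\tilde{L}_T^{p/2}B_{q/2}^{1,\infty}$. To reach the bold space I restrict to $[0,T_0]$: the Oseen norm is bounded by $C(1+T)$ uniformly in $T_0$, the embedding constants are independent of $T_0$, and $\|u\otimes u\|_{L_{T_0}^{p/2}L_x^{q/2}}\le\|u\|_{L_{T_0}^pL_x^q}^2\to0$ as $T_0\to0$ because $p/2<\infty$; hence $\omega\in\mathbf{\tilde{L}}_T^{p/2}B_{q/2}^{1,\infty}$.

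Assertion 2) is the most direct, since Lemma \ref{L41} already asserts that $\mathbb{L}_u$ is bounded on $\mathbf{\tilde{L}}_\delta^\rho B_m^{\sigma,\infty}$ whenever $\rho\ge\frac{2}{1+r}$ and $\sigma>r$; it therefore suffices to control the seed $e^{t\Delta}u_0$. As $u_0\in L^q\hookrightarrow B_q^{0,\infty}$, Proposition \ref{Pr24}(2) applied with the finite time-exponent $\frac{2}{1+r}$, which gains $1+r$ derivatives, gives $e^{t\Delta}u_0\in\mathbf{\tilde{L}}_T^{2/(1+r)}B_q^{1+r,\infty}$. The choice $\rho=\frac{2}{1+r}$, $m=q$, $\sigma=1+r>r$ is admissible in Lemma \ref{L41}, so $\omega_0=\mathbb{L}_u(e^{t\Delta}u_0)$ remains in this bold space, which is exactly 2).

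Assertion 3) cannot be obtained from Lemma \ref{L41} in the same manner, because $e^{t\Delta}u_0$ built from $u_0\in L^q$ has no reason to lie in a space with the lower integrability index $q/2$; instead I would exploit the bilinear structure of $\mathbb{L}_u$ to drop the integrability through the product itself. Writing $\omega_0=\sum_{k=1}^2\mathbb{L}_{Oss}(\Pi_k(u,e^{t\Delta}u_0))$, I use $u\in L_T^pL_x^q$ and, via Proposition \ref{Pr24}(2), $e^{t\Delta}u_0\in\tilde{L}_T^pB_q^{2/p,\infty}$. Proposition \ref{Pr23}(2) (with $\sigma_2=\frac{2}{p}>0$, $q_1=q_2=q$, $p_1=p_2=p$) then places $\Pi_k(u,e^{t\Delta}u_0)\in\tilde{L}_T^{p/2}B_{q/2}^{2/p,\infty}$, and the Oseen gain of Proposition \ref{Pr25} raises the regularity to $1+\frac{2}{p}$, giving $\omega_0\in\tilde{L}_T^{p/2}B_{q/2}^{1+2/p,\infty}$. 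The bold refinement again follows by working on $[0,T_0]$ and using $\|u\|_{L_{T_0}^pL_x^q}\to0$ while $\|e^{t\Delta}u_0\|_{\tilde{L}_{T_0}^pB_q^{2/p,\infty}}$ stays uniformly bounded.

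The main obstacle I anticipate is precisely this systematic passage to the bold spaces $\mathbf{\tilde{L}}$: every estimate must be arranged so that at least one factor carries a finite time-integrability exponent whose norm over $[0,T_0]$ tends to $0$ (here $u\otimes u$, $e^{t\Delta}u_0$, or $u$ itself), and one must check that the operator norms coming from Propositions \ref{Pr23}, \ref{Pr25} and Lemma \ref{L41} stay uniformly bounded in $T_0$. Beyond this, the argument reduces to bookkeeping the indices so that the hypotheses $\sigma_2>0$, $\frac{1}{p_1}+\frac{1}{p_2}\le1$, $\frac{1}{q_1}+\frac{1}{q_2}\le1$, $\rho\ge\frac{2}{1+r}$ and $\sigma>r$ are all satisfied, which follows from $q\ge d\ge2$ and $p\ge\frac{4}{1+r}>2$.
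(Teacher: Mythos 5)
Your proof is correct and follows essentially the same route as the paper's: H\"older plus the embedding into $\mathbf{\tilde{L}}_{T}^{p/2}(B_{q/2}^{0,\infty})$ and Proposition \ref{Pr25} for assertion 1), the heat-semigroup smoothing of Proposition \ref{Pr24} followed by Lemma \ref{L41} for assertion 2), and the mixed para-product continuity $\Pi_{k}:L_{T}^{p}L_{x}^{q}\times\tilde{L}_{T}^{p}(B_{q}^{2/p,\infty})\rightarrow\tilde{L}_{T}^{p/2}(B_{q/2}^{2/p,\infty})$ of Proposition \ref{Pr23} followed by Proposition \ref{Pr25} for assertion 3). Your explicit restriction-to-$[0,T_{0}]$ argument for the bold spaces and your observation that 3) cannot be reduced to Lemma \ref{L41} (because $e^{t\Delta}u_{0}$ lives at integrability $q$, not $q/2$) simply make precise what the paper leaves implicit.
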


\begin{lemma}
\label{L43}Let $X_{1}$ and $X_{2}$ be two Banach space and let $f$ be a
function defined on $X_{1}$ and $X_{2}$ such that $f:X_{1}\rightarrow X_{1}$
and $f:X_{2}\rightarrow X_{2}$ are contractions. Then the fixed point of
$f$ in $X_{1}$ belongs to $X_{2}.$
\end{lemma}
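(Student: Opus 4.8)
The plan is to realize both fixed points as limits of one and the same Picard iteration sequence, started at a point common to $X_1$ and $X_2$, and then to invoke uniqueness of limits. The key structural remark is that, although the statement is phrased abstractly, it is only meaningful when $X_1$ and $X_2$ are both continuously embedded in a common Hausdorff topological vector space $Y$ (in our applications $Y=S^{\prime}(\mathbb{R}^{d})$, or one of the Chemin--Lerner ambient spaces), so that the intersection $X_1\cap X_2$ and the membership $a\in X_2$ for $a\in X_1$ make sense; I will work under this implicit hypothesis, which is always satisfied in the way the lemma is applied.

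First I would apply the Banach fixed point theorem in each space separately: since $f:X_1\rightarrow X_1$ and $f:X_2\rightarrow X_2$ are contractions, there is a unique fixed point $a\in X_1$ and a unique fixed point $b\in X_2$, and, crucially, each of them is the limit of the iterates $f^{n}(x_0)$ for \emph{any} choice of starting point $x_0$ in the respective space. The whole argument will hinge on choosing the \emph{same} $x_0$ in both iterations.

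Next I would pick a single starting point lying in both spaces, the natural choice being $x_0=0$, which belongs to $X_1\cap X_2$ because both are vector spaces sharing the zero of $Y$. The decisive observation is that $f$ is literally one function: its restrictions to $X_1$ and to $X_2$ coincide on $X_1\cap X_2$. Hence $f(x_0)$ is a single well-defined element that lies simultaneously in $X_1$ (as $f:X_1\rightarrow X_1$) and in $X_2$ (as $f:X_2\rightarrow X_2$), and by induction every iterate $x_n=f^{n}(x_0)$ belongs to $X_1\cap X_2$. Thus $(x_n)_{n}$ is one and the same sequence, which I will read in two different norms.

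Finally I would pass to the limit. By the contraction property in $X_1$, $x_n\rightarrow a$ for the $X_1$-norm, hence in $Y$; by the contraction property in $X_2$, $x_n\rightarrow b$ for the $X_2$-norm, hence also in $Y$. Since $Y$ is Hausdorff, a sequence has at most one limit, so $a=b$, and therefore the fixed point $a$ of $f$ in $X_1$ coincides with $b\in X_2$, which is exactly the assertion. The only delicate point --- and the step that requires the standing embedding hypothesis rather than any computation --- is precisely this identification of the two limits: without a common ambient Hausdorff space the two fixed points could not be compared at all, so the argument rests entirely on the continuous embeddings $X_1,X_2\hookrightarrow Y$ together with the fact that a single map $f$ is being iterated from a common starting point.
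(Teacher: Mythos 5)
Your proof is correct (granting the compatibility hypothesis you state explicitly, which is indeed tacit in the lemma and is satisfied in every use made of it in the paper), but it takes a different route from the paper's. The paper forms the intersection $X=X_{1}\cap X_{2}$, endows it with the norm $\Vert\cdot\Vert_{X_{1}}+\Vert\cdot\Vert_{X_{2}}$, observes that $f$ is a contraction of $X$ into itself (your remark that $f$ is literally one map is what makes $f(X)\subset X$ true there as well), applies Banach's fixed point theorem in $X$ to obtain a fixed point $z^{\prime}\in X$, and then identifies $z^{\prime}$ with the fixed point in $X_{1}$ by uniqueness of fixed points in $X_{1}$; hence the fixed point in $X_{1}$ lies in $X\subset X_{2}$. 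You instead run the Picard iteration from the common starting point $0$ and identify the two limits through the ambient Hausdorff space $Y$. The two arguments are close cousins --- Banach's theorem is itself proved by Picard iteration --- but they distribute the compatibility burden differently: the paper's proof buries it in the unproved assertion that $X$ is a Banach space (completeness of the intersection under the sum norm is exactly the statement that $X_{1}$-limits and $X_{2}$-limits of a common sequence agree, i.e.\ your embedding hypothesis $X_{1},X_{2}\hookrightarrow Y$), whereas your argument never needs the intersection to be complete and pays instead with an explicit appeal to uniqueness of limits in $Y$. Your version is thus slightly more elementary and more candid about the hidden hypothesis; the paper's is shorter and, once completeness of $X$ is granted, identifies the two fixed points purely by uniqueness inside $X_{1}$, without returning to the ambient space.
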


Let us admit for a moment these lemmas and prove that there exists $T_{0}%
\in]0,T]$ such that $u\in\mathbf{\tilde{L}}_{T_{0}}^{\frac{2}{1+r}}%
(B_{q}^{1+r,\infty}).$

\begin{proof}
Set $\omega=\mathbf{B}(u,u)$ and $\omega_{0}=\mathbb{L}_{u}(e^{t\Delta}u_{0})$
as in Lemma \ref{L42}, and consider the following decomposition of $\omega:$%
\[
\omega=\omega_{0}+\mathbb{L}_{u}(\omega)\equiv F_{u}(\omega).
\]
Lemma \ref{L41} and the last two assertions of Lemma \ref{L42} ensure that,
for $T_{0}\in]0,T]$ small enough such that $\left\Vert u\right\Vert _{L_{T_{0}%
}^{\frac{2}{1-r}}(B_{\infty}^{-r,\infty})}$ be less than an absolute constant
depending only on $r,p,$ and $q,$ the function $F_{u}$ is a contraction on the
Banach spaces $\mathbf{\tilde{L}}_{T_{0}}^{\frac{2}{1+r}}(B_{q}^{1+r,\infty})$
and $\mathbf{\tilde{L}}_{T_{0}}^{\frac{p}{2}}(B_{\frac{q}{2}}^{1+\frac{2}%
{p},\infty}).$ But the first assertion of Lemma \ref{L42} and the definition
of the application $F_{u}$ imply that $\omega$ is the fixed point of $F_{u}$
in the space $\mathbf{\tilde{L}}_{T_{0}}^{\frac{p}{2}}(B_{\frac{q}{2}%
}^{1+\frac{2}{p},\infty});$ hence lemma \ref{L43} yields $\omega
\in\mathbf{\tilde{L}}_{T_{0}}^{\frac{2}{1+r}}(B_{q}^{1+r,\infty}).$ Therefore,
thanks to the first assertion of Lemma \ref{L42} and the fact that
$u=\omega_{0}+\omega$, we conclude that $u$ belongs to the space
$\mathbf{\tilde{L}}_{T_{0}}^{\frac{2}{1+r}}(B_{q}^{1+r,\infty}).$
\end{proof}

Let us now prove the above three lemmas.

\begin{proof}
[Proof of Lemma \ref{L41}]It is a direct consequence of the injection
$L_{T}^{\frac{2}{1-r}}(B_{\infty}^{-r,\infty})\hookrightarrow\mathbf{\tilde
{L}}_{T}^{\frac{2}{1-r}}(B_{\infty}^{-r,\infty}),$ the continuity of the Bony
para-product operators $\Pi_{k}$ on the Chemin-Lerner spaces (see
Proposition \ref{Pr23}), and the regularizing effect of the Ossen integral
operator $\mathbb{L}_{Oss}$ (see Proposition \ref{Pr25}).
\end{proof}

\smallskip

\begin{proof}
[Proof of Lemma \ref{L42}]From Holder inequality, $u\otimes u\in L_{T}%
^{\frac{p}{2}}L_{x}^{\frac{q}{2}}.$ Hence the injection $L_{T}^{\frac{p}{2}%
}L_{x}^{\frac{q}{2}}$ in $\mathbf{\tilde{L}}_{T}^{\frac{p}{2}}(B_{\frac{q}{2}%
}^{0,\infty})$ and Proposition \ref{Pr25} imply that $\omega\in\mathbf{\tilde
{L}}_{T}^{\frac{p}{2}}(B_{\frac{q}{2}}^{1,\infty}).$ To prove the second and
the third assertion, we first notice in view of the injection of
$L^{q}(\mathbb{R}^{d})$ in $\tilde{B}_{q}^{0,\infty}$ and Proposition
\ref{Pr24} we have $e^{t\Delta}u_{0}$ belongs to the space $\mathbf{\tilde{L}%
}_{T}^{\frac{2}{1+r}}(B_{q}^{1+r,\infty})$ and the space $\mathbf{\tilde{L}%
}_{T}^{p}(B_{q}^{\frac{2}{p},\infty}).$ Therefore, Lemma \ref{L41} implies
that $\omega_{0}\in\mathbf{\tilde{L}}_{T}^{\frac{2}{1+r}}(B_{q}^{1+r,\infty
}).$ Finally, the continuity of the Bony para-product operators
\[
\Pi_{k}:L_{T}^{p}L_{x}^{q}\times\mathbf{\tilde{L}}_{T}^{p}(B_{q}^{\frac{2}%
{p},\infty})\rightarrow\mathbf{\tilde{L}}_{T}^{\frac{p}{2}}(B_{\frac{q}{2}%
}^{\frac{2}{p},\infty})
\]
and the regularizing effect of the Ossen integral operator $\mathbb{L}_{Oss}$
(see Proposition \ref{Pr25}) yield that $\omega_{0}\in\mathbf{\tilde{L}}%
_{T}^{\frac{p}{2}}(B_{\frac{q}{2}}^{1+\frac{2}{p},\infty}).$
\end{proof}

\smallskip

\begin{proof}
[Proof of Lemma \ref{L43}]We consider the Banach space $X=X_{1}\cap X_{2}$
endowed with the norm $\left\Vert .\right\Vert =\left\Vert .\right\Vert
_{X_{1}}+\left\Vert .\right\Vert _{X2}.$ It is clear that $f$ is a contraction
on $X,$ hence in view of the Banach's fixed point theorem it has a unique
fixed point $z^{\prime}$ in $X$ which, thanks to the fact that $X\subset
X_{1}$ and the Banach's fixed point theorem, is the unique fixed point of $f$
in the space $X_{1}.$
\end{proof}

\subsubsection{The second step}

Let $u_{1},u_{2}\in L_{T}^{\frac{2}{1-r}}(B_{\infty}^{-r,\infty})\cap
L_{T}^{p}L_{x}^{q}$ be two solutions of the Navier-Stokes equations with the
same initial data $u_{0}\in L^{q}(\mathbb{R}^{d}).$ According to above step,
$u_{1},u_{2}\in\mathcal{Z}_{T_{0}}\equiv\mathbf{\tilde{L}}_{T_{0}}^{\frac
{2}{1+r}}(B_{q}^{1+r,\infty})\cap\mathbf{\tilde{L}}_{T_{0}}^{\frac{2}{1-r}%
}(B_{\infty}^{1-r,\infty})$ for some $T_{0}\in]0,T].$ Let $\delta\in]0,T_{0}]$
to be fixed later. A simple application of Proposition \ref{Pr23} and
Proposition \ref{Pr25} implies that the Bilinear operator $\mathbf{B}$ is
continuous from $\mathcal{Z}_{\delta}\times\mathcal{Z}_{\delta}$ to
$\mathbf{\tilde{L}}_{\delta}^{\frac{2}{1+r}}(B_{q}^{1+r,\infty})\cap
\mathbf{\tilde{L}}_{\delta}^{\frac{2}{1-r}}(B_{q}^{1-r,\infty})$ and its norm
is bounded by a constant $C$ independent of $\delta.$ Using now the
Berstein's injection $\mathbf{\tilde{L}}_{\delta}^{\frac{2}{1-r}}%
(B_{q}^{1-r,\infty})\hookrightarrow\mathbf{\tilde{L}}_{\delta}^{\frac{2}{1-r}%
}(B_{\infty}^{1-r-\frac{d}{q},\infty})$ and the assumption $q\geq d,$ we
deduce that $\mathbf{B}:\mathcal{Z}_{\delta}\times\mathcal{Z}_{\delta
}\rightarrow\mathcal{Z}_{\delta}$ is continuous and therefore%
\[
\left\Vert u_{1}-u_{2}\right\Vert _{\mathcal{Z}_{\delta}}\leq C(\left\Vert
u_{1}\right\Vert _{\mathcal{Z}_{\delta}}+\left\Vert u_{2}\right\Vert
_{\mathcal{Z}_{\delta}})\left\Vert u_{1}-u_{2}\right\Vert _{\mathcal{Z}%
_{\delta}},
\]
with $C$ independent of $\delta.$ Hence, up to choose $\delta$ small enough,
we conclude that $u_{1}=u_{2}$ on $[0,\delta].$

\subsection{Proof of Proposition \ref{Pr42}}

The proof of the first assertion of the proposition \ref{Pr42} is classical
and well-known (see for instance \cite{Can}, \cite{Lem02}, \cite{Mey}). The
prove of the assertions (2) and (3) repose essentially on the following
elementary lemma where the following notation is used.

\begin{notation}
Let $T,\mu>0$ be two non negative real numbers. We denote by $L_{\mu
,T}^{\infty}$ the space of measurable functions $f:[0,T[\times\mathbb{R}%
^{d}\rightarrow\mathbb{R}^{d}$ such that
\[
\left\Vert f\right\Vert _{L_{\mu,T}^{\infty}}\equiv\sup_{0<s<T}s^{\frac{\mu
}{2}}\left\Vert f(s)\right\Vert _{\infty}<\infty.
\]

\end{notation}

\begin{lemma}
\label{L44}Let $r\in]0,1[$ and $T>0.$ Then the bilinear operator $B$ is
continuous from $L_{1,T}^{\infty}\times L_{r,T}^{\infty}$ (respectively
$L_{r,T}^{\infty}\times L_{r,T}^{\infty}$) into $L_{r,T}^{\infty}$ with norm
less than $C_{r,d}$ (respectively $C_{r,d}T^{\frac{1-r}{2}}$) where $C_{r,d}$
is a non negative constant which depends only on $r$ and $d.$
\end{lemma}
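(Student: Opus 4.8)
The plan is to reduce the bilinear estimate to a single scalar convolution-type inequality in the time variable and then to evaluate the resulting integral by an Euler Beta-function computation. First I would start from the integral representation
\[
\mathbf{B}(u,v)(t)=-\int_{0}^{t}e^{(t-s)\Delta}\mathbb{P}\nabla(u\otimes v)(s)\,ds,
\]
and recall, as already used in the proof of Theorem \ref{Th2}, that $e^{(t-s)\Delta}\mathbb{P}\nabla$ is a convolution operator whose kernel has $L^{1}$ norm bounded by $C(t-s)^{-\frac{1}{2}}$ for an absolute constant $C$ depending only on $d$. Combining this with Young's inequality for the convolution and the trivial pointwise bound $\left\Vert (u\otimes v)(s)\right\Vert_{\infty}\leq\left\Vert u(s)\right\Vert_{\infty}\left\Vert v(s)\right\Vert_{\infty}$ gives
\[
\left\Vert \mathbf{B}(u,v)(t)\right\Vert_{\infty}\leq C\int_{0}^{t}(t-s)^{-\frac{1}{2}}\left\Vert u(s)\right\Vert_{\infty}\left\Vert v(s)\right\Vert_{\infty}\,ds.
\]

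Next I would insert the definitions of the weighted norms. In the first case $u\in L_{1,T}^{\infty}$ and $v\in L_{r,T}^{\infty}$, so $\left\Vert u(s)\right\Vert_{\infty}\leq s^{-\frac{1}{2}}\left\Vert u\right\Vert_{L_{1,T}^{\infty}}$ and $\left\Vert v(s)\right\Vert_{\infty}\leq s^{-\frac{r}{2}}\left\Vert v\right\Vert_{L_{r,T}^{\infty}}$. Plugging these in and substituting $s=t\tau$ turns the time integral into $t^{-\frac{r}{2}}B(\tfrac{1}{2},\tfrac{1-r}{2})\left\Vert u\right\Vert_{L_{1,T}^{\infty}}\left\Vert v\right\Vert_{L_{r,T}^{\infty}}$; multiplying by $t^{\frac{r}{2}}$ and taking the supremum over $t\in]0,T[$ yields the first estimate with constant $C_{r,d}=C\,B(\tfrac{1}{2},\tfrac{1-r}{2})$. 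In the second case $u,v\in L_{r,T}^{\infty}$, the same substitution produces the factor $t^{\frac{1}{2}-r}B(1-r,\tfrac{1}{2})$; multiplying by $t^{\frac{r}{2}}$ leaves a remaining power $t^{\frac{1-r}{2}}\leq T^{\frac{1-r}{2}}$, which gives the second estimate with the extra factor $T^{\frac{1-r}{2}}$ and a constant proportional to $B(1-r,\tfrac{1}{2})$.

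The only delicate point, and the place where the hypothesis $r\in]0,1[$ is genuinely used, is the convergence of these Beta integrals. Near $s=t$ the factor $(t-s)^{-\frac{1}{2}}$ is integrable for any exponent larger than $-1$, so there is no issue there; near $s=0$ the singularities are $s^{-\frac{1+r}{2}}$ in the first case and $s^{-r}$ in the second, and both are integrable \emph{precisely} because $r<1$ (equivalently $\tfrac{1-r}{2}>0$ and $1-r>0$, the two Beta arguments that must be positive). I would therefore check that these arguments are strictly positive, which is exactly the standing assumption, and record that the resulting constant depends only on $r$ and $d$. The scaling computation $s=t\tau$ that produces the exact powers $t^{-\frac{r}{2}}$ and $t^{\frac{1}{2}-r}$, and hence the claimed gain $T^{\frac{1-r}{2}}$ in the second case, is routine once the substitution is made, so I would not belabor it.
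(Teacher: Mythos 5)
Your proof is correct and takes essentially the same approach the paper intends: the paper's entire justification of Lemma \ref{L44} is the remark that $e^{(t-s)\Delta}\mathbb{P}\nabla$ is a convolution operator whose kernel has $L^{1}$ norm of order $(t-s)^{-\frac{1}{2}}$, after which Young's inequality and the Beta-function computation you spell out are exactly the omitted routine details. Your exponents $t^{-\frac{r}{2}}$ and $t^{\frac{1}{2}-r}$, the resulting factor $T^{\frac{1-r}{2}}$ in the second case, and the identification of $r<1$ as the condition making both time integrals converge all check out.
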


The proof of this lemma is simple, we just have to recall that $e^{(t-s)\Delta
}\mathbb{P}\nabla$ is a convolution operator with an integrable function with
$L^{1}(\mathbb{R}^{d})$ norm of order  $\frac{1}{\sqrt{t-s}}$.

Now we are ready to prove the last two assertions of the proposition
\ref{Pr42}.

\begin{proof}
It is well-known (see for example \cite{Lem02}) that there exists $T_{0}%
\in]0,T]$ such that the solution $u$ given in the first assertion is the limit
in the Banach space $X_{T_{0}}\equiv L_{1,T_{0}}^{\infty}\cap L_{r,T_{0}%
}^{\infty}$ of the sequence $(u_{(n)})_{n}$ defined by:%
\begin{align*}
u_{(0)}  &  =e^{t\Delta}u_{0},\\
\forall n  &  \in\mathbb{N},~u_{(n+1)}=u_{(0)}+\mathbf{B}(u_{(n)},u_{(n)}),
\end{align*}
and $(\sigma_{n}\equiv\left\Vert u_{(n+1)}-u_{(n)}\right\Vert _{X_{T_{0}}})\in
l^{1}(\mathbb{N}).$ We will prove that $(u_{(n)})_{n}$ is a cauchy sequence in
the space $L_{r,T_{0}}^{\infty}.$ First, notice that since $u_{0}\in
B_{\infty}^{-r,\infty}$ then, from Proposition \ref{Pr22}, $u_{(0)}\in
L_{r,T_{0}}^{\infty}.$ Hence, by iteration, Lemma \ref{L44} guarantees that
the sequence $(u_{(n)})_{n}$ belongs to the space $L_{r,T_{0}}^{\infty}$ and
satisfies the following inequality:%
\[
\left\Vert u_{(n+1)}-u_{(n)}\right\Vert _{L_{r,T_{0}}^{\infty}}\leq
C_{r,d}\sigma_{n}\left(  \left\Vert u_{(n)}\right\Vert _{L_{r,T_{0}}^{\infty}%
}+\left\Vert u_{(n-1)}\right\Vert _{L_{r,T_{0}}^{\infty}}\right)  ,
\]
which implies (see \cite{FLZZ}) that $(u_{(n)})_{n}$ is a Cauchy sequence
$L_{r,T_{0}}^{\infty}.$ Using now the fact that $L_{r,T_{0}}^{\infty}$ is a
complete space, that $L_{r,T_{0}}^{\infty}\hookrightarrow L_{1,T_{0}}^{\infty
},$ and the fact that $(u_{(n)})_{n}$ converges to the solution $u$ in
$L_{1,T_{0}}^{\infty},$ we deduce that $u\in L_{r,T_{0}}^{\infty}.$ Let us now
show that $u\in L_{T_{0}}^{\infty}(B_{\infty}^{-r,\infty})$ which ends the
proof of the second assertion of our proposition. First, since $u_{0}\in
B_{\infty}^{-r,\infty}$ then, from Proposition \ref{Pr24}, $e^{t\Delta}%
u_{0}\in L_{T_{0}}^{\infty}(B_{\infty}^{-r,\infty}).$ Second, Proposition
\ref{Pr22} and Young's inequalities imply that for every $t\in]0,T_{0}],$%
\begin{align*}
\left\Vert \mathbf{B}(u,u)(t)\right\Vert _{B_{\infty}^{-r,\infty}}  &
\lesssim\sup_{0<\theta\leq1}\theta^{\frac{r}{2}}\left\Vert e^{\theta\Delta
}\mathbf{B}(u,u)(t)\right\Vert \\
&  \lesssim\sup_{0<\theta\leq1}\theta^{\frac{r}{2}}\int_{0}^{t}\frac
{1}{s^{\frac{1+r}{2}}\sqrt{t+\theta-s}}ds\left\Vert u\right\Vert _{L_{r,T_{0}%
}^{\infty}}\left\Vert u\right\Vert _{L_{1,T_{0}}^{\infty}}\\
&  \lesssim\left\Vert u\right\Vert _{L_{r,T_{0}}^{\infty}}\left\Vert
u\right\Vert _{L_{1,T_{0}}^{\infty}}.
\end{align*}
This completes the proof since $u=e^{t\Delta}u_{0}+\mathbf{B}(u,u).$

Let us prove the last assertion of Proposition \ref{Pr42}. Suppose that
$T^{\ast}<\infty.$ Let $r\in]0,1[$ and $t_{0}$ in $I_{\ast}\equiv
]\max(0,T^{\ast}-),T^{\ast}[.$ According to the remark \ref{Rk23}, $u_{t_{0}%
}\equiv u(.+t_{0})$ satisfies on $[0,\delta_{0}\equiv T^{\ast}-t_{0}[$ the
following equality%
\[
u_{t_{0}}(t)=e^{t\Delta}u(t_{0})+\mathbf{B}(u_{t_{0}},u_{t_{0}})(t).
\]
Hence, in view of Proposition \ref{Pr22} and Lemma \ref{L44}, we have for
every $t\in\lbrack0,\delta_{0}[$%
\[
t^{\frac{r}{2}}\left\Vert u(t)\right\Vert _{\infty}\leq C\left(  \left\Vert
u(t_{0})\right\Vert _{B_{\infty}^{-r,\infty}}+t^{\frac{1-r}{2}}\left(
\sup_{0<s\leq t}t^{\frac{r}{2}}\left\Vert u(s)\right\Vert _{\infty}\right)
^{2}\right)  .
\]
Define $f(t)\equiv\sup_{0<s\leq t}t^{s\frac{r}{2}}\left\Vert u(s)\right\Vert
_{\infty}.$ It yields that for any $t\in\lbrack0,\delta_{0}[,$
\[
f(t)\leq C\left(  \left\Vert u(t_{0})\right\Vert _{B_{\infty}^{-r,\infty}%
}+(T^{\ast}-t_{0})^{\frac{1-r}{2}}f^{2}(t)\right)  .
\]
Recalling now that $\left\Vert u(t)\right\Vert _{\infty}\rightarrow+\infty$ as
$t\rightarrow T^{\ast}$ (see \cite{Gig}, \cite{Lem02}, \cite{May03}), which is
equivalent to $f(t)\rightarrow+\infty$ as $t\rightarrow T^{\ast}$; we deduce
from the elementary lemma below that
\[
\left\Vert u(t_{0})\right\Vert _{B_{\infty}^{-r,\infty}}(T^{\ast}%
-t_{0})^{\frac{1-r}{2}}\geq\varepsilon_{r,d}\equiv\frac{1}{4C^{2}},
\]
which ends the proof of our proposition.
\end{proof}

\begin{lemma}
Let $a<b$ two real numbers and $f:[a,b[\rightarrow R$ a continuous function.
Assume that there exist two reals numbers $A,B>0$ such that $4AB<1,~f(0)\leq
2A,$ and, for every $t\in\lbrack a,b[,~f(t)\leq A+Bf^{2}(t).$ Then, for every
$t\in\lbrack a,b[,~f(t)\leq2A.$
\end{lemma}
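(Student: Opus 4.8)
The plan is to treat the inequality $f(t)\le A+Bf^{2}(t)$ as a quadratic constraint on the scalar $f(t)$ and then run a continuity (bootstrap) argument. Rewriting the hypothesis as $Bf^{2}(t)-f(t)+A\ge 0$, the condition $4AB<1$ guarantees that the polynomial $P(x)=Bx^{2}-x+A$ has two distinct real roots
\[
x_{\pm}=\frac{1\pm\sqrt{1-4AB}}{2B},\qquad 0<x_{-}<x_{+},
\]
and $P(x)\ge 0$ holds exactly when $x\le x_{-}$ or $x\ge x_{+}$. Thus for every $t\in[a,b[$ we have the dichotomy: either $f(t)\le x_{-}$ or $f(t)\ge x_{+}$.

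First I would locate these roots relative to $2A$. Writing $s=\sqrt{1-4AB}\in(0,1)$ and using $4AB=1-s^{2}=(1-s)(1+s)$, one obtains $1-s\le 4AB$ (divide by $1-s>0$ and use $1\le 1+s$), hence $x_{-}=(1-s)/2B\le 2A$; similarly $x_{+}=(1+s)/2B>1/2B>2A$, the last inequality being just $4AB<1$ once more. Therefore
\[
0<x_{-}\le 2A<x_{+}.
\]
In particular the initial bound $f(a)\le 2A$ (reading the stated $f(0)\le 2A$ as the value at the left endpoint $a$) forces $f(a)<x_{+}$, so by the dichotomy necessarily $f(a)\le x_{-}$.

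The core of the argument is then a connectedness step. Consider the two sets
\[
S_{-}=\{t\in[a,b[\,:\,f(t)\le x_{-}\},\qquad S_{+}=\{t\in[a,b[\,:\,f(t)\ge x_{+}\}.
\]
By continuity of $f$ both are closed in $[a,b[$; by the quadratic dichotomy their union is all of $[a,b[$; and since $x_{-}<x_{+}$ they are disjoint. As $[a,b[$ is connected, it cannot be expressed as the disjoint union of two nonempty closed subsets. Since $a\in S_{-}$, the set $S_{-}$ is nonempty, and hence $S_{+}=\emptyset$. Consequently $f(t)\le x_{-}\le 2A$ for every $t\in[a,b[$, which is exactly the assertion.

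I expect the only mildly delicate point to be the elementary root estimate $x_{-}\le 2A<x_{+}$, obtained from the factorization $4AB=(1-s)(1+s)$; everything else is the standard principle that a continuous function satisfying a quadratic inequality $P(f)\ge 0$ on a connected interval, and starting below the gap $(x_{-},x_{+})$ between the two roots, is trapped below the smaller root $x_{-}$ throughout.
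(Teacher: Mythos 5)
Your proof is correct and is essentially the paper's argument: a continuity/bootstrap step on the connected interval $[a,b[$, using the hypothesis $4AB<1$ to create a forbidden gap that $f$ cannot cross. The paper's version is just leaner --- it observes that $f(t)=2A$ is impossible (it would force $2A\leq A+4A^{2}B$, i.e. $4AB\geq1$) and then applies the intermediate value theorem, whereas you make the gap $(x_{-},x_{+})$ between the roots of $Bx^{2}-x+A$ explicit before running the same connectedness argument.
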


The proof of this lemma is simple, it suffices to apply the intermediate value
theorem after noticing that if $4AB<1$ then $f(t)\neq2A$ for every
$t\in\lbrack a,b[.$

\subsection{The proof of Proposition \ref{Pr43}}

The proof of this paper is inspired by the paper \cite{Lem07} of P.G.
Lemarie'-Rieusset. Let $(t_{n})_{n}\in]0,\frac{T}{2}[$ such that
$t_{n}\rightarrow0$ as $n\rightarrow+\infty.$ We consider the sequence of
functions $(u_{n})_{n}$ definite on $[0,\frac{T}{2}]$ by $u_{n}(t)=u(t_{n}+t).$
We have to prove that $\sup_{0<t<\delta}\sqrt{t}\left\Vert u_{n}(t)\right\Vert
_{\infty}$ converges to $0$ as $\delta$ goes to $0$ uniformly on $n.$ Firstly,
in order to simplify the writing, we introduce the following notations:%
\begin{align*}
h_{n}(\mu,\delta)  &  \equiv\sup_{0<t<\delta}t^{\frac{\mu+1}{2}}\left\Vert
u_{n}(t)\right\Vert _{B_{\infty}^{\mu,\infty}},\\
\Theta(\delta)  &  =\sup_{0<t<\frac{T}{2}}\left\Vert u\right\Vert
_{L^{\frac{2}{1-r}}([t_{0},t_{0}+\delta],B_{\infty}^{-r,\infty})}.
\end{align*}
Let $\sigma\in]r,1[$ a fixed real number and $\delta_{0}\in]0,\frac{T}{2}[$ to
be chosen later. Let $n\in\mathbb{N},\delta\in]0,\delta_{0}[,$ and
$t\in]0,\delta].$ Let $a=a(n,t)$ be an element of the interval $[\frac{t}%
{4},\frac{t}{2}]$ such that%
\[
\left\Vert u_{n}(a)\right\Vert _{B_{\infty}^{-r,\infty}}=\inf_{s\in
\lbrack\frac{t}{4},\frac{t}{2}]}\left\Vert u_{n}(s)\right\Vert _{B_{\infty
}^{-r,\infty}}.
\]
Since $u_{n}$ is a mild solution of the equations of Navier-Stokes then,
according to Remark \ref{Rk23},%
\begin{align}
u_{n}(t)  &  =e^{(t-a)\Delta}u_{n}(a)-\int_{a}^{t}e^{(t-s)}\mathbb{P}%
\nabla(u_{n}\otimes_{n})ds\label{44}\\
&  \equiv I_{n}(t)+J_{n}(t). \label{45}%
\end{align}
Now we will estimate the norm of the terms $I_{n}(t)$ and $J_{n}(t)$ in the
Besov space $B_{\infty}^{\sigma,\infty}.$ According to the first assertion of
Proposition \ref{Pr24} and the definition of $a=a(n,t)$ we have%
\begin{align}
\left\Vert I_{n}(t)\right\Vert _{B_{\infty}^{\sigma,\infty}}  &  \lesssim
t^{-\frac{\sigma+r}{2}}\left\Vert u_{n}(a)\right\Vert _{B_{\infty}^{-r,\infty
}}\nonumber\\
&  \lesssim t^{-\frac{1+\sigma}{2}}\left\Vert u_{n}\right\Vert _{L^{\frac
{2}{1-r}}([\frac{t}{4},\frac{t}{2}],B_{\infty}^{-r,\infty})}\nonumber\\
&  \lesssim t^{-\frac{1+\sigma}{2}}\Theta(\delta). \label{46}%
\end{align}
On the other hand since $J_{n}(t)=\mathbb{L}_{Oss}(1_{[a,t]}u_{n}%
\otimes1_{[a,t]}u_{n}),$ then by using the continuity of the Bony para-product
operators $\Pi_{k}$ from $\tilde{L}_{T}^{\frac{2}{1-r}}(B_{\infty}^{-r,\infty
})\times L_{T}^{\infty}(B_{\infty}^{\sigma,\infty})$ to $\tilde{L}_{T}%
^{\frac{2}{1-r}}(B_{\infty}^{\sigma-r,\infty})$ (see Proposition \ref{Pr23})
and the continuity of the operator $\mathbb{L}_{Oss}$ from $\tilde{L}%
_{T}^{\frac{2}{1-r}}(B_{\infty}^{\sigma-r,\infty})$ to $L_{T}^{\infty
}(B_{\infty}^{\sigma,\infty})$ (see Proposition \ref{Pr25}) we easily deduce
that%
\begin{align}
\left\Vert J_{n}(t)\right\Vert _{B_{\infty}^{\sigma,\infty}}  &
\lesssim\left\Vert u_{n}\right\Vert _{\tilde{L}^{\frac{2}{1-r}}%
([a,t],B_{\infty}^{-r,\infty})}\left\Vert u_{n}\right\Vert _{L^{\infty
}([a,t],B_{\infty}^{\sigma,\infty})}\nonumber\\
&  \lesssim\left\Vert u_{n}\right\Vert _{L^{\frac{2}{1-r}}([a,t],B_{\infty
}^{-r,\infty})}\sup_{a\leq s\leq t}\left\Vert u_{n}(s)\right\Vert _{B_{\infty
}^{\sigma,\infty}}\nonumber\\
&  \lesssim t^{-\frac{1+\sigma}{2}}\Theta(\delta)h_{n}(\sigma,\delta
)\nonumber\\
&  \lesssim t^{-\frac{1+\sigma}{2}}\Theta(\delta_{0})h_{n}(\sigma,\delta).
\label{47}%
\end{align}
estimates (\ref{46}) and (\ref{47}) imply that there exists a constant
$C_{1}>0$ independent of $t,\delta,$ and $n$ such that%
\[
h_{n}(\sigma,\delta)\leq C_{1}\Theta(\delta)+C_{1}\Theta(\delta_{0}%
)h_{n}(\sigma,\delta).
\]
Hence, by choosing $\delta_{0}$ small enough such that $\Theta(\delta_{0}%
)\leq\frac{1}{2C_{1}}$ (which is always possible since $\Theta(\delta
_{0})\rightarrow0$ as $\delta_{0}\rightarrow0),$ we get%
\begin{equation}
h_{n}(\sigma,\delta)\leq C_{1}\Theta(\delta). \label{49}%
\end{equation}
Now let us go back to (\ref{44}) and (\ref{45}) in order to estimate the norms
of $I_{n}(t)$ and $J_{n}(t)$ in the Besov space $B_{\infty}^{-r,\infty}.$
Firstly, in view of Proposition \ref{Pr24} and the definition of $a=a(n,t)$ we
have%
\begin{align}
\left\Vert I_{n}(t)\right\Vert _{B_{\infty}^{-r,\infty}}  &  \lesssim
\left\Vert u_{n}(a)\right\Vert _{B_{\infty}^{-r,\infty}}\nonumber\\
&  \lesssim t^{-\frac{1-r}{2}}\Theta(\delta). \label{410}%
\end{align}
Secondly, by using the continuity of the operators $\Pi_{k}$ from $B_{\infty
}^{-r,\infty}\times B_{\infty}^{\sigma,\infty}$ to $B_{\infty}^{\sigma
-r,\infty},$ the action of $\mathbb{P}\nabla$ on Besov spaces (Proposition
\ref{Pr21}), and the first assertion of Proposition \ref{Pr24}, we get%
\begin{align}
\left\Vert J_{n}(t)\right\Vert _{B_{\infty}^{-r,\infty}}  &  \lesssim\int
_{a}^{t}\frac{1}{(t-s)^{\frac{1-\sigma}{2}}}\left\Vert \mathbb{P}\nabla
(u_{n}\otimes u_{n})\right\Vert _{B_{\infty}^{\sigma-r-1,\infty}}ds\nonumber\\
&  \lesssim t^{\frac{1+\sigma}{2}}\sup_{\frac{t}{4}<s<t}\left\Vert
u_{n}(s)\right\Vert _{B_{\infty}^{-r,\infty}}\sup_{\frac{t}{4}<s<t}\left\Vert
u_{n}(s)\right\Vert _{B_{\infty}^{\sigma,\infty}}\nonumber\\
&  \lesssim t^{-\frac{1-r}{2}}h_{n}(-r,\delta)h_{n}(\sigma,\delta)\nonumber\\
&  \lesssim t^{-\frac{1-r}{2}}h_{n}(-r,\delta)\Theta(\delta_{0}), \label{411}%
\end{align}
where we have used (\ref{49}) in the last inequality.

Combining (\ref{410}) and (\ref{411}), we deduce the existence of an absolute
constant $C_{2}>0$ independent of $t,\delta,$ and $n$ such that%
\[
h_{n}(-r,\delta)\leq C_{2}\Theta(\delta)+C_{2}\Theta(\delta_{0})h_{n}%
(-r,\delta).
\]
Hence, for $\delta_{0}$ small enough, we have%
\begin{equation}
h_{n}(-r,\delta)\leq2C_{2}\Theta(\delta). \label{412}%
\end{equation}
Using now the following interpolation inequality \cite{LM}%
\[
\left\Vert f\right\Vert _{\infty}\leq\left(  \left\Vert f\right\Vert
_{B_{\infty}^{-r,\infty}}\right)  ^{\frac{\sigma}{r+\sigma}}\left(  \left\Vert
f\right\Vert _{B_{\infty}^{\sigma,\infty}}\right)  ^{\frac{r}{r+\sigma}},
\]
we deduce from (\ref{49}) and (\ref{412}) that there exist two constants $C>0$
and $\delta_{0}\in]0,\frac{T}{2}]$ independent of $n$ such that for every
$\delta\in]0,\delta_{0}]$ we have%
\[
\sup_{0<t<\delta}\sqrt{t}\left\Vert u_{n}(t)\right\Vert _{\infty}\leq
C\Theta(\delta),
\]
which completes the proof of Proposition \ref{Pr43}.


\begin{thebibliography}{99}                                                                                               %


\bibitem {Can}M. Cannone,Ondelettes, paraproduit et Navier-Stokes, Diderot
Editeur, Paris, 1995

\bibitem {Che}J.Y. Chemin, Th\'{e}or\'{e}mes d'unicit\'{e} pour le syst\'{e}me
de Navier-Stokes tridimensionnel, J. Ana. Math. 77 (1999)27-50.

\bibitem {CL}J.-Y. Chemin et N. Lerner, Flot de champs de vecteurs
nonlipschitziens et \'{e}quations de Navier-Stokes, J. Differential Equations
121 (1995)314-328.

\bibitem {CMZ}Q. Chen, C. Miao, and Z. Zhang, On the uniqueness of weak
solutions for the 3D Navier-Stokes equations, Ann. Inst. H. Poincar\'e Anal.
Non lin\'eaire (2009)2165-2180.

\bibitem {Dan}R. Danchin, A few remarks on the Camassa-Holm equation,
Differential Integral Equations 14 (2001) 953-988.

\bibitem {FLT}G. Furioli, P.G. Lemari\'e-Rieusset \& E. Terraneo, Unicit\'e
dans $L^{3}(\mathbb{R}^{3})$ et d'autres espaces fonctionnels limites pour
Navier-Stokes, Rev. Mat. Ibroamerecana 16 (2000) 605-667.0

\bibitem {FLZZ}G. Furioli, P.G. Lemari\'{e}-Rieusset, E. Zahrouni \& A.
Zhioua, Un th\'{e}oreme de persistance de la r\'{e}gularit\'{e} en norme
d'espaces de Besov pour les solutions de Koch et Tataru des \'{e}quations de
Navier-Stokes dans $\mathbb{R}^{3}$, C. R. Acad. Sci. Paris Serie. I 330
(2002) 339-342.

\bibitem {GP}I. Gallagher and F. Planchon, On global infinite energy solutions
to the Navier-Stokes equations in two dimensionsn, Arch. Ration. Mech. Anal.
161 (2002) 307-337.

\bibitem {GMO}P. G\'{e}rard, Y. Meyer \& F. Oru, In\'{e}galit\'{e}s de Sobolev
pr\'{e}cis\'{e}es, Equations aux D\'{e}riv\'{e}es Partielles, S\'{e}minaire de
L'Ecole Polytechniques (1996-1997) expos\'{e} n. IV.

\bibitem {Ger}P. Germain, Multipliers, paramultipliers, and weak-strong
uniqueness for the Navier-Stokes equations, J. Differential Equations 226
(2006) 373-428.

\bibitem {Gig}Y. Giga, Solutions for semilnear parabolic equations in $L^{p}$
and regularity of weak solutions of the Navier-Stokes system, J. Differential
Equations 62 (1986) 186-212.

\bibitem {Lem02}P.G. Lemari\'e-Rieusset, Recent developments in the
Navier-Stokes problem, Chapman \& Hall/CRC, 2002.

\bibitem {Lem07}P.G. Lemari\'e-Rieusset, Uniqueness for the Navier-Stokes
problem: remarks on a theorem of Jean-Yves Chemin, Nonlinearity 20 (2007) 1475-1490.

\bibitem {Lem08}P.G. Lemari\'e-Rieusset, The Picard iterates for the
Navier-Stokes equations in $L^{3}(\mathbb{R}^{3})$, Phys. D 237 (2008) 1334-1345.

\bibitem {LM}P.G. Lemari\'e-Rieusset \& F. Marchand, Solutions auto-similaires
non radiales pour l'\'equation quasi-g\'eostrophique dissipative critique, C.
R. Acad. Sci. Paris Ser. I 341 (2005) 535-538.

\bibitem {Ler}J. Leray, Sur le mouvement d'un liquide visqueux emplissant
l'espace, Acta Math. 63 (1934) 193-248.

\bibitem {May03}R. May, R\^ole de l'espace de Besov $B_{\infty}^{-1,\infty}$
dans le contr\^ole de l'explosion eventuelle en temps fini des solutions
r\'egulieres des \'equations de Navier-Stokes, C. R. Acad. Sci. Paris 323
(2003) 731-734. (See arXiv:0908.1513 [math.AP] for the english version of this paper).

\bibitem {May09}R. May, Unicit\'e des solutions des \'equations de
Navier-Stokes dans les espaces de Morrey-Campanato, Bull. Sci. Math. (2009)

\bibitem {Mey}Y. Meyer, Wavelets, paraproducts and Navier-Stokes equations,
Curent developments in mathematics 1996, International Press, Combridge MA, 1999.

\bibitem {Miu}H. Miura, Remark on uniqueness of mild solutions to the
Navier-Stokes equations, J. Funct. Anal. 218 (2005) 110-129.

\bibitem {Ser}J. Serrin, The initial value problem for the Navier-Stokes
equations, In: R.E. langer (Ed), Nonlinear problems 1963), pp.69-98.

\bibitem {Tem}R. Temam, Navier-Stokes Equations: Theory and numerical
analysis, North-Holland, Amesterdam, 1977.

\bibitem {Tri}H. Triebel, Theory of funcyion spaces, Monogr. Math., Vol. 78,
Birkhauser Verlag, Basel 1983.

\bibitem {Von}W. Von Wahl,The equations of Navier-Stokes and abstract
parabolic equations, Vieweg \& Sohn, Wiesbaden, 1985.
\end{thebibliography}
\end{document}